\DeclareMathOperator{\td}{d\mspace{-2mu}}
\theoremstyle{plain}
\newtheorem{lem}{Lemma}
\newtheorem{thm}{Theorem}
\newtheorem{cor}{Corollary}
\theoremstyle{remark}
\newtheorem{rem}{Remark}
\numberwithin{equation}{section}
\begin{document}

\title[Complete monotonicity of functions involving polygamma functions]
{Complete monotonicity of some functions involving polygamma functions}

\author[F. Qi]{Feng Qi}
\address[F. Qi]{Research Institute of Mathematical Inequality Theory, Henan Polytechnic University, Jiaozuo City, Henan Province, 454010, China}
\email{\href{mailto: F. Qi <qifeng618@gmail.com>}{qifeng618@gmail.com}, \href{mailto: F. Qi <qifeng618@hotmail.com>}{qifeng618@hotmail.com}, \href{mailto: F. Qi <qifeng618@qq.com>}{qifeng618@qq.com}}
\urladdr{\url{http://qifeng618.spaces.live.com}}

\author[S. Guo]{Senlin Guo}
\address[S. Guo]{Department of Mathematics, Zhongyuan University of Technology, Zhengzhou City, Henan Province, 450007, China}
\email{\href{mailto: S. Guo <sguo@hotmail.com>}{sguo@hotmail.com}, \href{mailto: S. Guo <senlinguo@gmail.com>}{senlinguo@gmail.com}}

\author[B.-N. Guo]{Bai-Ni Guo}
\address[B.-N. Guo]{School of Mathematics and Informatics,
Henan Polytechnic University, Jiaozuo City, Henan Province, 454010, China}
\email{\href{mailto: B.-N. Guo <bai.ni.guo@gmail.com>}{bai.ni.guo@gmail.com}, \href{mailto: B.-N. Guo <bai.ni.guo@hotmail.com>}{bai.ni.guo@hotmail.com}}
\urladdr{\url{http://guobaini.spaces.live.com}}

\begin{abstract}
In the present paper, we establish necessary and sufficient conditions for the functions $x^\alpha\bigl\lvert\psi^{(i)}(x+\beta)\bigr\lvert$ and $\alpha\bigl\lvert\psi^{(i)}(x+\beta)\bigr\lvert-x\bigl\lvert\psi^{(i+1)}(x+\beta)\bigr\lvert$ respectively to be monotonic and completely monotonic on $(0,\infty)$, where $i\in\mathbb{N}$, $\alpha>0$ and $\beta\ge0$ are scalars, and $\psi^{(i)}(x)$ are polygamma functions.
\end{abstract}

\keywords{monotonicity, completely monotonic function, polygamma function, infinite series, Bernoulli numbers}

\subjclass[2000]{26A48, 26A51, 26D20, 33B10, 33B15, 44A10}

\thanks{The first author was partially supported by the China Scholarship Council}

\thanks{This paper was typeset using \AmS-\LaTeX}

\maketitle

\section{Introduction}

\subsection{}
Recall~\cite[Chapter~XIII]{mpf-93} and~\cite[Chapter~IV]{widder} that a function $f(x)$ is said to be completely monotonic on an interval $I\subseteq\mathbb{R}$ if $f(x)$ has derivatives of all orders on $I$ and
\begin{equation}
0\le(-1)^{k}f^{(k)}(x)<\infty
\end{equation}
holds for all $k\geq0$ on $I$. This definition was introduced in 1921 by F. Hausdorff in~\cite{Hausdorff-1921}, who called such functions ``total monoton''.
\par
The celebrated Bernstein-Widder Theorem~\cite[p.~161]{widder} states that a function $f(x)$ is completely monotonic on $(0,\infty)$ if and only if
\begin{equation}\label{converge}
f(x)=\int_0^\infty e^{-xs}\td\mu(s),
\end{equation}
where $\mu$ is a nonnegative measure on $[0,\infty)$ such that the integral~\eqref{converge} converges for all $x>0$. This means that a function $f(x)$ is completely monotonic on $(0,\infty)$ if and only if it is a Laplace transform of the measure $\mu$.
\par
The most important properties of completely monotonic functions can be found in~\cite[Chapter~XIII]{mpf-93}, \cite[Chapter~IV]{widder}, \cite{Berg-book-84, haer} and the related references therein.
\par
The completely monotonic functions have applications in different branches of mathematical sciences. For example, they play some role in combinatorics~\cite{Ball-94}, numerical and asymptotic analysis~\cite{Frenzen-art-87, Wimp-book-81}, physics~\cite{Day-art-70, Feller-book-prob}, potential theory~\cite{Berg-Forest-book-75}, and probability theory~\cite{Bondesson-book-92, Feller-book-prob, Kimberling-art-74}.

\subsection{}
It is well-known~\cite{abram, wange, wang} that the classical Euler gamma function may be defined for $x>0$ by
\begin{equation}\label{egamma}
\Gamma(x)=\int^\infty_0t^{x-1} e^{-t}\td t.
\end{equation}
The logarithmic derivative of $\Gamma(x)$, denoted by $\psi(x)=\frac{\Gamma'(x)}{\Gamma(x)}$, is called psi function, and $\psi^{(k)}(x)$ for $k\in \mathbb{N}$ are called polygamma functions.
\par
It should be common knowledge~\cite{abram, wange, wang} that the special functions $\Gamma(x)$, $\psi(x)$ and $\psi^{(i)}(x)$ for $i\in\mathbb{N}$ are important and basic and that they have much extensive applications in mathematical sciences.

\subsection{}\label{ACAP-323-sec-1.3}
In~\cite[Lemma~1]{alzeraeq}, it was shown that the functions $x^c\bigl\lvert\psi^{(k)}(x)\bigr\lvert$ for $k\in\mathbb{N}$ and $c\in\mathbb{R}$ are strictly decreasing (or strictly increasing, respectively) on $(0,\infty)$ if and only if $c\le k$ (or $c\ge k+1$, respectively).
\par
In~\cite[Theorem~4.14]{forum-alzer}, it was obtained that the function $x^c\bigl\lvert\psi^{(k)}(x)\bigr\lvert$ for $k\in\mathbb{N}$ and $c\in\mathbb{R}$ is strictly convex on $(0,\infty)$ if and only if either $c\le k$, or $c=k+1$, or $c\ge k+2$. In~\cite[Remark~4.15]{forum-alzer}, it was pointed out that there does not exist a real number $c$ such that the function $x^c\bigl\lvert\psi^{(k)}(x)\bigr\lvert$ for $k\in\mathbb{N}$ is concave on $(0,\infty)$.
\par
In~\cite[Lemma~2.2]{forum-alzer} and~\cite[Lemma~5]{alsina-tomas-sandor.tex-rgmia}, the functions $x^\alpha\bigl\lvert\psi^{(i)}(x+1)\bigr\lvert$ for $i\in\mathbb{N}$ are proved to be strictly increasing (or strictly decreasing, respectively) on $(0,\infty)$ if and only if $\alpha\ge i$ (or $\alpha\le0$, respectively).
\par
In~\cite[Lemma~2.1]{gao-sub-add-psi}, the function $x\psi'(x+a)$ is proved to be strictly increasing on $[0,\infty)$ for $a\ge1$.
\par
Motivated by the above results, the first and third authors considered in~\cite{subadditive-qi.tex} the monotonicity of a more general function $x^\alpha\bigl\lvert\psi^{(i)}(x+\beta)\bigr\lvert$ and the complete monotonicity of several related functions as follows: For $i\in\mathbb{N}$, $\alpha>0$ and $\beta\ge0$,
\begin{enumerate}
\item
the function $x^\alpha\bigl\lvert\psi^{(i)}(x+\beta)\bigr\lvert$ is strictly increasing on $(0,\infty)$ if $(\alpha,\beta)\in\bigl\{\alpha\ge i,\frac12\le\beta<1\bigr\}\cup\bigl\{\alpha\ge i,\beta\ge\frac{\alpha-i+1}2\bigr\} \cup\bigl\{\alpha\ge i+1,\beta\le\frac{\alpha-i+1}2\bigr\}$ and only if $\alpha\ge i$;
\item
the function $\frac{\alpha}{x}\bigl\lvert\psi^{(i)}(x)\bigr\lvert-\bigl\lvert\psi^{(i+1)}(x)\bigl\lvert$ is completely monotonic on $(0,\infty)$ if and only if $\alpha\ge i+1$;
\item
the function $\bigl\lvert\psi^{(i+1)}(x)\bigl\lvert-\frac{\alpha}{x}\bigl\lvert\psi^{(i)}(x)\bigl\lvert$ is completely monotonic on $(0,\infty)$ if and only if $0<\alpha\le i$;
\item
the function $\frac{\alpha}{x}\bigl\lvert\psi^{(i)}(x+1)\bigl\lvert-\bigl\lvert\psi^{(i+1)}(x+1)\bigl\lvert$ is completely monotonic on $(0,\infty)$ if and only if $\alpha\ge i$;
\item
the function $\frac{\alpha}{x}\bigl\lvert\psi^{(i)}(x+\beta)\bigl\lvert -\bigl\lvert\psi^{(i+1)}(x+\beta)\bigl\lvert$ on $(0,\infty)$ is completely monotonic if $(\alpha,\beta)\in\bigl\{\alpha\ge i+1,\beta\le\frac{\alpha-i+1}2\bigr\}
\cup\bigl\{i\le \alpha\le i+1,\beta\ge\frac{\alpha-i+1}2\bigr\}\cup\bigl\{i\le\alpha\le \frac{(i+1) (i+4\beta-2)} {i+2\beta},\frac12\le\beta<1\bigr\}$ and only if $\alpha\ge i$;
\item
the function $\alpha\bigl\lvert\psi^{(i)}(x+\beta)\bigl\lvert-x\bigl\lvert\psi^{(i+1)}(x+\beta)\bigl\lvert$ is completely monotonic on $(0,\infty)$ if $(\alpha,\beta)\in\bigl\{i\le \alpha\le i+1,\beta\ge\frac{\alpha-i+1}2\bigr\} \cup\bigl\{\alpha\ge i+1,\beta\le\frac{\alpha-i+1}2\bigr\}$ and only if $\alpha\ge i$.
\end{enumerate}

\subsection{}\label{sec-1.4}
The first aim of this paper is to present necessary and sufficient conditions for the function $x^\alpha\bigl\lvert\psi^{(i)}(x+\beta)\bigl\lvert$ to be monotonic on $(0,\infty)$, which can be summarized as the following Theorem~\ref{itsf-gao-gen-3}.

\begin{thm}\label{itsf-gao-gen-3}
Let $i\in\mathbb{N}$, $\alpha\in\mathbb{R}$ and $\beta\ge0$.
\begin{enumerate}
\item
The function $x^\alpha\bigl\lvert\psi^{(i)}(x)\bigl\lvert$ is strictly increasing \textup{(}or strictly decreasing, respectively\textup{)} on $(0,\infty)$ if and only if $\alpha\ge i+1$ \textup{(}or $\alpha\le i$, respectively\textup{)}.
\item
For $\beta\ge\frac12$, the function $x^\alpha\bigl\lvert\psi^{(i)}(x+\beta)\bigl\lvert$ is
strictly increasing on $[0,\infty)$ if and only if $\alpha\ge i$.
\item
Let $\delta:(0,\infty)\to\bigl(0,\frac12\bigr)$ be defined by
\begin{equation}\label{delta-dfn}
\delta(t)=\frac{e^t(t-1)+1}{(e^t-1)^2}
\end{equation}
and $\delta^{-1}:\bigl(0,\frac12\bigr)\to(0,\infty)$
stand\texttt{} for the inverse function of $\delta$. If $0<\beta<\frac12$ and
\begin{equation}\label{alpha-beta-inverse}
\alpha\ge i+1 -\biggl[\frac{e^{\delta^{-1}(\beta)}} {e^{\delta^{-1}(\beta)}-1}
+\beta-1\biggr]\delta^{-1}(\beta),
\end{equation}
then the function $x^\alpha\bigl\lvert\psi^{(i)}(x+\beta)\bigl\lvert$ is strictly increasing
on $(0,\infty)$.
\end{enumerate}
\end{thm}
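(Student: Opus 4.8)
The plan is to collapse all three parts into a single statement about the sign of
\[
N(x)=\alpha\bigl\lvert\psi^{(i)}(x+\beta)\bigr\rvert-x\bigl\lvert\psi^{(i+1)}(x+\beta)\bigr\rvert .
\]
Setting $f(x)=x^{\alpha}\bigl\lvert\psi^{(i)}(x+\beta)\bigr\rvert$ and using that $(-1)^{i+1}\psi^{(i)}(y)>0$ on $(0,\infty)$, we have $\bigl\lvert\psi^{(i)}(x+\beta)\bigr\rvert=(-1)^{i+1}\psi^{(i)}(x+\beta)$ and $\frac{\td}{\td x}\bigl\lvert\psi^{(i)}(x+\beta)\bigr\rvert=-\bigl\lvert\psi^{(i+1)}(x+\beta)\bigr\rvert$, so that
\[
f'(x)=x^{\alpha-1}N(x).
\]
Since $x^{\alpha-1}>0$, the function $f$ is strictly increasing (respectively decreasing) on $(0,\infty)$ if and only if $N(x)\ge0$ (respectively $N(x)\le0$) throughout; on $[0,\infty)$ the endpoint is then settled by continuity together with $f(0)=0$. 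Everything therefore reduces to locating the constant-sign region of $N$.

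The heart of the argument is an explicit representation of $N$. Starting from $\bigl\lvert\psi^{(i)}(x+\beta)\bigr\rvert=\int_{0}^{\infty}t^{i}\frac{e^{-(x+\beta)t}}{1-e^{-t}}\td t$ and the analogous formula for $\psi^{(i+1)}$, I would write $N(x)=\int_{0}^{\infty}t^{i}\frac{e^{-\beta t}}{1-e^{-t}}(\alpha-xt)e^{-xt}\td t$, decompose $\alpha-xt=(\alpha-1)+(1-xt)$, use $(1-xt)e^{-xt}=\frac{\td}{\td t}\bigl[t e^{-xt}\bigr]$, and integrate by parts in $t$. For $i\ge1$ the boundary terms vanish, and with $\frac{\td}{\td t}\ln\frac{1}{1-e^{-t}}=-\frac{1}{e^{t}-1}$ this produces the key identity
\[
N(x)=\int_{0}^{\infty}t^{i}\frac{e^{-\beta t}}{1-e^{-t}}\Bigl[(\alpha-i-1)+\frac{t}{e^{t}-1}+\beta t\Bigr]e^{-xt}\td t .
\]
The entire $x$-dependence now sits in the factor $e^{-xt}$, while the bracket depends on $t$ alone. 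Hence, writing $h(t)=\frac{t}{e^{t}-1}+\beta t$, if $(\alpha-i-1)+h(t)\ge0$ for every $t>0$ the integrand is nonnegative and not identically zero, so $N$ is a Laplace transform of a nonnegative measure; it is thus completely monotonic and strictly positive, whence $f$ is strictly increasing. This gives the clean sufficient condition $\alpha\ge i+1-\min_{t>0}h(t)$.

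It remains to evaluate $\min_{t>0}h(t)$, and this is exactly where $\delta$ enters: since $\delta(t)=-\frac{\td}{\td t}\frac{t}{e^{t}-1}$, one has $h'(t)=\beta-\delta(t)$, so stationarity is the equation $\delta(t)=\beta$. Using that $\delta$ is a strictly decreasing bijection of $(0,\infty)$ onto $\bigl(0,\frac12\bigr)$ (equivalently that $\frac{t}{e^{t}-1}$ is strictly convex), the three regimes separate: for $\beta\ge\frac12$ one has $h'>0$, the infimum is the limit $h(0^{+})=1$, and the condition becomes $\alpha\ge i$ (part (2)); for $\beta=0$ the function $h(t)=\frac{t}{e^{t}-1}$ decreases to $0$, giving $\alpha\ge i+1$ (the increasing half of part (1)); and for $0<\beta<\frac12$ the minimum is attained at $t_{0}=\delta^{-1}(\beta)$ with value $\frac{t_{0}}{e^{t_{0}}-1}+\beta t_{0}=\bigl[\frac{e^{t_{0}}}{e^{t_{0}}-1}+\beta-1\bigr]t_{0}$, which is precisely the bound \eqref{alpha-beta-inverse} (part (3)). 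The strictly decreasing half of part (1) follows by applying the same criterion to $-N$, giving $\alpha\le i+1-\sup_{t>0}h(t)=i$. For the necessity halves of (1) and (2) I would read off the identity asymptotically: as $x\to0^{+}$ (with $\beta=0$) the constant term forces $N(x)\sim(\alpha-i-1)\,i!/x^{i+1}$, so $\alpha\ge i+1$ is necessary, while as $x\to\infty$ the measure concentrates at $t\to0$, where the bracket tends to $\alpha-i$, forcing $\alpha\ge i$ (and $\alpha\le i$ in the decreasing case).

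The main obstacle is obtaining the integration-by-parts identity in the first place. A frontal assault on part (3) would require solving the transcendental stationarity relation $\bigl\lvert\psi^{(i+1)}\bigr\rvert\bigl\lvert\psi^{(i)}\bigr\rvert=x\bigl(\bigl\lvert\psi^{(i+2)}\bigr\rvert\bigl\lvert\psi^{(i)}\bigr\rvert-\bigl\lvert\psi^{(i+1)}\bigr\rvert^{2}\bigr)$ for the maximizer of $x\bigl\lvert\psi^{(i+1)}(x+\beta)\bigr\rvert/\bigl\lvert\psi^{(i)}(x+\beta)\bigr\rvert$, and this does not reduce to $\delta$ at all. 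The whole theorem hinges on first trading this $x$-extremization for the positivity of the single, $x$-free bracket $(\alpha-i-1)+\frac{t}{e^{t}-1}+\beta t$; once that is in hand, $\delta$ appears automatically as $-\frac{\td}{\td t}\frac{t}{e^{t}-1}$ and the remaining optimization is elementary. A secondary point needing care is that part (3) is only a sufficiency statement: the completely monotonic criterion over-estimates the true supremum of the ratio, so no matching necessity is to be expected there.
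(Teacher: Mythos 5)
Your proposal is correct, and it reaches exactly the paper's key auxiliary function by a genuinely different route. Your bracket $(\alpha-i-1)+\frac{t}{e^{t}-1}+\beta t$ is identically the paper's $h_{i,\alpha,\beta}(t)=\frac{t}{1-e^{-t}}+(\beta-1)t+\alpha-i-1$ (since $\frac{t}{1-e^{-t}}=t+\frac{t}{e^{t}-1}$), and the subsequent optimization via $\delta(t)=-\frac{\td}{\td t}\frac{t}{e^{t}-1}$ is the same in both arguments. The difference is how the bracket is produced: the paper never represents $N(x)$ itself as a Laplace transform; instead it forms the telescoping difference $N(x)-N(x+1)$, evaluates it with the recurrence $\psi^{(i)}(x+1)=\psi^{(i)}(x)+(-1)^{i}i!/x^{i+1}$ and the integral representations to get $\int_0^\infty h_{i,\alpha,\beta}(t)t^ie^{-(x+\beta)t}\td t$, and then must invoke its Lemma~\ref{mentioned} together with the limit $\lim_{x\to\infty}N(x)=0$ (which in turn needs the double inequality of Lemma~\ref{comp-thm-1}) to convert positivity of the difference into positivity of $N$. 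Your single integration by parts in $t$ gives $N(x)=\int_0^\infty\frac{t^ie^{-\beta t}}{1-e^{-t}}h_{i,\alpha,\beta}(t)e^{-xt}\td t$ directly — in effect summing the paper's telescoping series in closed form via $\frac1{1-e^{-t}}=\sum_{n\ge0}e^{-nt}$ — which bypasses Lemma~\ref{mentioned} and the limit computation entirely and, as a bonus, yields at once the complete monotonicity of $\pm N$ that the paper has to re-derive in Theorem~\ref{psi-plus-comp-mon-3} by the same telescoping device. Your necessity arguments (the $x\to0^+$ limit forcing $\alpha\ge i+1$ for $\beta=0$, and the $x\to\infty$ limit giving $(i-1)!(\alpha-i)$) are the same limits the paper extracts from Lemma~\ref{comp-thm-1}; your phrasing via Watson's lemma is terser but sound, though in a final write-up you should justify the interchange of limit and integral (or simply quote the two-sided bounds of Lemma~\ref{comp-thm-1} as the paper does). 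The only caveats are minor: the vanishing of the boundary term at $t=0$ in your integration by parts genuinely needs $i\ge1$, which you noted, and the monotonicity on the closed interval $[0,\infty)$ in part (2) needs the one-line continuity remark you supplied.
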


As a by-product of the proof of Theorem~\ref{itsf-gao-gen-3}, lower and upper bounds for infinite series whose coefficients involve Bernoulli numbers may be derived as follows.

\begin{cor}\label{bernou-thm}
Let $0<\beta<\frac12$ and $\delta^{-1}$ be the inverse function of $\delta$
defined by~\eqref{delta-dfn}. Then the following inequalities holds:
\begin{gather}
\frac12>\sum_{k=1}^\infty B_{2k}\frac{t^{2k-1}}{(2k-1)!}>0,\\
\frac{t}2>\sum_{k=0}^\infty B_{2k+2}\frac{t^{2k+2}}{(2k+2)!}
>\max\biggl\{0,\frac{t}2-1\biggr\},\\
\sum_{k=0}^\infty B_{2k+2}\frac{t^{2k+2}}{(2k+2)!}
>\biggl(\frac12-\beta\biggr)t+
\biggl[\frac{e^{\delta^{-1}(\beta)}} {e^{\delta^{-1}(\beta)}-1}
-\beta+1\biggr]\delta^{-1}(\beta)-1,
\end{gather}
where $t\in(0,\infty)$ and $B_n$ for $n\ge0$ represent Bernoulli numbers which may be defined~\cite{abram, wange, wang} by
\begin{equation}
\begin{aligned}
\frac{x}{e^x-1}&=\sum_{n=0}^\infty \frac{B_n}{n!}x^n =1-\frac{x}2+\sum_{j=1}^\infty B_{2j}\frac{x^{2j}}{(2j)!}, &\vert x\vert &<2\pi.
\end{aligned}
\end{equation}
\end{cor}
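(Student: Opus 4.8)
The plan is to reduce all three inequalities to elementary estimates for the single auxiliary function
\[
g(t)=\frac{t}{e^t-1}+\frac t2-1=\frac t2\coth\frac t2-1 ,
\]
which, by the generating function quoted in the statement, satisfies $g(t)=\sum_{k=1}^\infty B_{2k}\frac{t^{2k}}{(2k)!}$ for $0<t<2\pi$; differentiating term by term gives $g'(t)=\sum_{k=1}^\infty B_{2k}\frac{t^{2k-1}}{(2k-1)!}$. A direct computation shows
\[
g'(t)=\frac{e^t-1-te^t}{(e^t-1)^2}+\frac12=\frac12-\delta(t),
\]
so the two series in the corollary are nothing but $g'(t)$ and $g(t)$, and the first two assertions become $0<\delta(t)<\frac12$ and $\max\bigl\{0,\frac t2-1\bigr\}<g(t)<\frac t2$. (Outside the disc of convergence $|t|<2\pi$ the statements are read through these closed forms.)

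First I would dispose of inequality (1), i.e.\ $0<\delta(t)<\frac12$: the numerator $e^t(t-1)+1$ vanishes at $t=0$ and has derivative $te^t>0$, hence is positive, giving $\delta(t)>0$; and $\delta(t)<\frac12$ is equivalent to $e^{2t}-2te^t-1>0$, which again vanishes at $t=0$ and has derivative $2e^t(e^t-1-t)>0$. For inequality (2) I would argue from the closed form: $g(t)<\frac t2$ is $\frac{t}{e^t-1}<1$, i.e.\ $e^t-1>t$; $g(t)>\frac t2-1$ is $\frac{t}{e^t-1}>0$; and $g(t)>0$ is the classical estimate $\frac t2\coth\frac t2>1$ (equivalently $u\coth u>1$ for $u>0$, which holds since $u\cosh u-\sinh u$ vanishes at $0$ with derivative $u\sinh u>0$). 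These combine to $g(t)>\max\bigl\{0,\frac t2-1\bigr\}$. Alternatively $0<g(t)<\frac t2$ follows just by integrating inequality (1) from $g(0)=0$.

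The substantive point is inequality (3), and here the plan is to realize its right-hand side as the tangent line of $g$ at the abscissa $s:=\delta^{-1}(\beta)$. The key input is that $g$ is strictly convex: with $u=t/2$ one computes
\[
g''(t)=\frac{g(t)}{2\sinh^2(t/2)}>0 ,
\]
the positivity coming precisely from inequality (2); this also re-proves that $\delta=\frac12-g'$ is strictly decreasing, so that $\delta^{-1}$ is well defined. Since $g'(s)=\frac12-\delta(s)=\frac12-\beta$, the tangent line of $g$ at $s$ is $\ell(t)=\bigl(\frac12-\beta\bigr)(t-s)+g(s)$, and strict convexity yields $g(t)>\ell(t)$ for every $t\ne s$ (with equality at $t=s$). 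A routine simplification of its intercept via the closed form,
\[
g(s)-\Bigl(\tfrac12-\beta\Bigr)s=\frac{s}{e^s-1}+\beta s-1=\Bigl[\frac{e^s}{e^s-1}+\beta-1\Bigr]s-1 ,
\]
should reproduce the constant on the right-hand side of the third inequality, the bracket being exactly the one appearing in condition~\eqref{alpha-beta-inverse}. I expect the main obstacle to be precisely this step — recognizing the prescribed constant as a tangent intercept and establishing the convexity of $g$ cleanly, since a brute-force sign analysis of $g''$ or of $\delta'$ is unpleasant, whereas the identity $g''=g/\bigl(2\sinh^2(t/2)\bigr)$ makes convexity immediate once inequality (2) is in hand.
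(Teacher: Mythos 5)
Your proposal is correct and, at its core, follows the same skeleton as the paper's argument: both proofs identify the two series with $\frac{t}{e^t-1}+\frac t2-1$ and its derivative $\frac12-\delta(t)$ (in the paper these appear as $h_{i,\alpha,\beta}(t)=\alpha-i+\bigl(\beta-\frac12\bigr)t+\sum_{k\ge0}B_{2k+2}\frac{t^{2k+2}}{(2k+2)!}$ and $h_{i,\alpha,\beta}'(t)=\beta-\delta(t)$), and both obtain the third inequality by minimizing this function at the point where $\delta(t)=\beta$, i.e.\ at $t=\delta^{-1}(\beta)$. The difference is one of packaging and self-containedness: the paper's proof of the corollary is essentially a list of six sign facts cited from the proof of Theorem~\ref{itsf-gao-gen-3} (where the monotonicity of $h_{i,\alpha,\beta}$ together with its boundary values at $0^+$ and $\infty$ does the work), whereas you verify everything directly from the closed forms --- $0<\delta<\frac12$ by two elementary derivative arguments, the second chain from $e^t>1+t$ and $u\coth u>1$, and the third inequality as a tangent-line estimate justified by the identity $g''(t)=g(t)/\bigl(2\sinh^2(t/2)\bigr)$. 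That identity is a genuinely nicer route to the strict convexity of $g$ (equivalently the strict decrease of $\delta$) than the paper's unexplained ``standard argument,'' and your remark about the radius of convergence $|t|<2\pi$ addresses a point the paper silently ignores, since the corollary asserts the inequalities for all $t\in(0,\infty)$.

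One substantive observation your computation brings out: the tangent intercept is
\begin{equation*}
g(s)-\Bigl(\tfrac12-\beta\Bigr)s=\Bigl[\frac{e^{s}}{e^{s}-1}+\beta-1\Bigr]s-1,\qquad s=\delta^{-1}(\beta),
\end{equation*}
which is exactly the quantity appearing in~\eqref{alpha-beta-inverse} and in the paper's own minimization in the proof of Theorem~\ref{itsf-gao-gen-3}; the constant printed in the corollary carries the bracket $\frac{e^{s}}{e^{s}-1}-\beta+1$ instead, which exceeds yours by $(2-2\beta)s>0$ and would make the stated inequality fail at $t=s$ (where the tangent bound is attained with equality). So the printed statement contains a sign typo, and your version is the one actually proved by the paper's method.
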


\subsection{}\label{sec-1.5}
The second aim of this paper is to establish necessary and sufficient conditions for the function $\alpha\bigl\lvert\psi^{(i)}(x+\beta)\bigl\lvert-x\bigl\lvert\psi^{(i+1)}(x+\beta)\bigl\lvert$ to be completely monotonic on $(0,\infty)$, which may be stated as the following Theorem~\ref{psi-plus-comp-mon-3}.

\begin{thm}\label{psi-plus-comp-mon-3}
Let $i\in\mathbb{N}$, $\alpha\in\mathbb{R}$ and $\beta\ge0$.
\begin{enumerate}
\item
The function
\begin{equation}\label{alpha-psi-x}
\alpha\bigl\lvert\psi^{(i)}(x)\bigl\lvert-x\bigl\lvert\psi^{(i+1)}(x)\bigl\lvert
\end{equation}
is completely monotonic on $(0,\infty)$ if and only if $\alpha\ge i+1$.
\item
The negative of the function~\eqref{alpha-psi-x} is completely monotonic on $(0,\infty)$ if and only if $\alpha\le i$.
\item
If $\beta\ge\frac12$, then the function
\begin{equation}\label{alpha-psi-x-beta}
\alpha\bigl\lvert\psi^{(i)}(x+\beta)\bigl\lvert -x\bigl\lvert\psi^{(i+1)}(x+\beta)\bigl\lvert
\end{equation}
is completely monotonic on $(0,\infty)$ if and only if
$\alpha\ge i$.
\item
If $0<\beta<\frac12$ and the inequality~\eqref{alpha-beta-inverse} is valid, then the function~\eqref{alpha-psi-x-beta} is completely monotonic on $(0,\infty)$.
\end{enumerate}
\end{thm}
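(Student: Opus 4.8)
The plan is to reduce everything to a single Laplace-transform representation and then to the pointwise positivity of its kernel. First I would record the elementary identity $\frac{\td}{\td x}\bigl\lvert\psi^{(i)}(x+\beta)\bigr\rvert=-\bigl\lvert\psi^{(i+1)}(x+\beta)\bigr\rvert$, coming from $\bigl\lvert\psi^{(i)}(x)\bigr\rvert=(-1)^{i+1}\psi^{(i)}(x)$. Setting $g(x)=x^\alpha\bigl\lvert\psi^{(i)}(x+\beta)\bigr\rvert$, one finds
\begin{equation*}
g'(x)=x^{\alpha-1}\bigl[\alpha\bigl\lvert\psi^{(i)}(x+\beta)\bigr\rvert-x\bigl\lvert\psi^{(i+1)}(x+\beta)\bigr\rvert\bigr],
\end{equation*}
so that the function~\eqref{alpha-psi-x-beta} equals $x^{1-\alpha}g'(x)$. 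This settles the necessity halves at once: a nontrivial completely monotonic function, being the Laplace transform of a nonzero nonnegative measure, is strictly positive, so if~\eqref{alpha-psi-x-beta} is completely monotonic then $g'>0$ and $g$ is strictly increasing on $(0,\infty)$, whence Theorem~\ref{itsf-gao-gen-3} forces $\alpha\ge i+1$ in case~(1) and $\alpha\ge i$ in case~(3); case~(2) is the mirror image, giving $\alpha\le i$.

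For sufficiency I would invoke the integral representation $\bigl\lvert\psi^{(i)}(x+\beta)\bigr\rvert=\int_0^\infty\frac{t^i e^{-\beta t}}{1-e^{-t}}e^{-xt}\,\td t$ and the analogue for $i+1$. The only term needing care is $x\bigl\lvert\psi^{(i+1)}(x+\beta)\bigr\rvert$; writing $x e^{-xt}=-\frac{\td}{\td t}e^{-xt}$ and integrating by parts (the boundary terms vanish since $\frac{t^{i+1}}{1-e^{-t}}\sim t^i\to0$ as $t\to0^+$ for $i\ge1$) turns it into a Laplace transform. Combining the pieces gives
\begin{equation*}
\alpha\bigl\lvert\psi^{(i)}(x+\beta)\bigr\rvert-x\bigl\lvert\psi^{(i+1)}(x+\beta)\bigr\rvert
=\int_0^\infty\frac{t^i e^{-\beta t}}{1-e^{-t}}\Bigl[\alpha-(i+1)+\beta t+\frac{t}{e^t-1}\Bigr]e^{-xt}\,\td t,
\end{equation*}
so by the Bernstein--Widder Theorem the function is completely monotonic precisely when the bracket is nonnegative for every $t>0$, that is, when $\alpha\ge G(t):=(i+1)-\beta t-\frac{t}{e^t-1}$.

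It then remains to evaluate $\sup_{t>0}G(t)$. A short computation yields $G'(t)=\delta(t)-\beta$ with $\delta$ as in~\eqref{delta-dfn}, together with $G(0^+)=i$ and $G(+\infty)=i+1$ when $\beta=0$, $G(+\infty)=-\infty$ when $\beta>0$. Using that $\delta$ decreases strictly from $\frac12$ onto $0$ (its range $\bigl(0,\frac12\bigr)$ being the first inequality of Corollary~\ref{bernou-thm}), the sign of $G'$ is immediate: for $\beta=0$ one gets $\sup G=i+1$, hence case~(1), while $\inf G=i$ yields case~(2); for $\beta\ge\frac12$ one has $G'<0$ throughout, so $\sup G=G(0^+)=i$, hence case~(3); and for $0<\beta<\frac12$ the map $G$ rises to its unique maximum at $t_0=\delta^{-1}(\beta)$ and then falls, so $\sup G=G(t_0)$. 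The identity $\frac{e^{t_0}t_0}{e^{t_0}-1}-t_0=\frac{t_0}{e^{t_0}-1}$ shows that $\alpha\ge G(t_0)$ is exactly the bound~\eqref{alpha-beta-inverse}, which is case~(4).

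The hard part is the supremum analysis in the regime $0<\beta<\frac12$: it relies on the strict monotonicity and the precise range $\bigl(0,\frac12\bigr)$ of $\delta$, which guarantee both that $\delta^{-1}(\beta)$ is well defined and that $t_0$ is a genuine maximum rather than a saddle. These properties are furnished by Corollary~\ref{bernou-thm} and the analysis behind Theorem~\ref{itsf-gao-gen-3}; once they are in hand, the reduction of $G(t_0)$ to~\eqref{alpha-beta-inverse} is only elementary algebra.
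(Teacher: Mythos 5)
Your proof is correct, and for the sufficiency part it takes a genuinely more direct route than the paper. The paper never represents $\alpha\bigl\lvert\psi^{(i)}(x+\beta)\bigr\rvert-x\bigl\lvert\psi^{(i+1)}(x+\beta)\bigr\rvert$ as a single Laplace transform: instead it forms the telescoping difference $F(x)-F(x+1)$ using the recurrence~\eqref{psisymp4}, arrives at the representation~\eqref{3-key-eq} with kernel $h_{i,\alpha,\beta}(t)\,t^ie^{-(x+\beta)t}$, and then recovers the complete monotonicity of $F$ itself by combining the vanishing limits~\eqref{lim=0-1} and~\eqref{j=0} with the telescoping Lemma~\ref{mentioned}, applied to every derivative $(-1)^jF^{(j)}$. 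Your integration by parts ($xe^{-xt}=-\frac{\td}{\td t}e^{-xt}$, with boundary terms vanishing for $i\ge1$) produces the representation of $F$ directly, with kernel $\frac{t^ie^{-\beta t}}{1-e^{-t}}\bigl[\alpha-(i+1)+\beta t+\frac{t}{e^t-1}\bigr]$; multiplying the bracket by nothing more than the identity $\frac{te^t}{e^t-1}=t+\frac{t}{e^t-1}$ shows it equals the paper's $h_{i,\alpha,\beta}(t)$ exactly, so your subsequent positivity analysis via $G'(t)=\delta(t)-\beta$ and the identification of $\sup_{t>0}G(t)$ with the bound in~\eqref{alpha-beta-inverse} coincides with the paper's study of $h_{i,\alpha,\beta}$. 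What your route buys is the elimination of Lemma~\ref{mentioned} and of the limit computations, plus, via uniqueness of the Bernstein measure, a self-contained necessity argument in all three two-sided cases that does not rely on the asymptotic estimates of Lemma~\ref{comp-thm-1} (the paper instead delegates necessity to the proof of Theorem~\ref{itsf-gao-gen-3}, which is also the fallback you state). The only point worth making explicit is that your kernel is integrable near $t=0$ (it behaves like $t^{i-1}$ there, since $h_{i,\alpha,\beta}(0^+)=\alpha-i$ is finite), so the Bernstein--Widder criterion genuinely applies.
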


As immediate consequences of Theorem~\ref{psi-plus-comp-mon-3}, the following corollary is obtained.

\begin{cor}\label{cor-sub-3}
Let $i\in\mathbb{N}$, $\alpha\in\mathbb{R}$ and $\beta\ge0$.
\begin{enumerate}
\item
The function
\begin{equation}\label{alpha-psi/x}
\frac{\alpha}x\bigl\lvert\psi^{(i)}(x)\bigl\lvert-\bigl\lvert\psi^{(i+1)}(x)\bigl\lvert
\end{equation}
is completely monotonic on $(0,\infty)$ if and only if $\alpha\ge i+1$.
\item
The negative of the function~\eqref{alpha-psi/x} is completely monotonic on $(0,\infty)$ if and only if $\alpha\le i$.
\item
If $\beta\ge\frac12$, then the function
\begin{equation}\label{alpha-psi/x-beta}
\frac{\alpha}x\bigl\lvert\psi^{(i)}(x+\beta)\bigl\lvert-\bigl\lvert\psi^{(i+1)}(x+\beta)\bigl\lvert
\end{equation}
is completely monotonic on $(0,\infty)$ if and only if $\alpha\ge i$.
\item
If $0<\beta<\frac12$ and the inequality~\eqref{alpha-beta-inverse} holds true, then the function~\eqref{alpha-psi/x-beta} is completely monotonic on $(0,\infty)$.
\end{enumerate}
\end{cor}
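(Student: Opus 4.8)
The plan is to build on the elementary identity
\begin{equation}\label{cor-factor}
\frac{\alpha}x\bigl\lvert\psi^{(i)}(x+\beta)\bigr\rvert-\bigl\lvert\psi^{(i+1)}(x+\beta)\bigr\rvert=\frac1x\Bigl[\alpha\bigl\lvert\psi^{(i)}(x+\beta)\bigr\rvert-x\bigl\lvert\psi^{(i+1)}(x+\beta)\bigr\rvert\Bigr],
\end{equation}
valid on $(0,\infty)$, which exhibits every function in Corollary~\ref{cor-sub-3} as the product of $\frac1x$ with the corresponding function of Theorem~\ref{psi-plus-comp-mon-3}. The whole statement should therefore follow once one records how the factor $\frac1x$ interacts with complete monotonicity.

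For the sufficiency parts I would use two standard facts. First, $\frac1x$ is completely monotonic on $(0,\infty)$, since $(-1)^k\bigl(\frac1x\bigr)^{(k)}=\frac{k!}{x^{k+1}}\ge0$ for every $k\ge0$. Second, a product of two completely monotonic functions is completely monotonic, which is immediate from the Leibniz formula $(-1)^n(fg)^{(n)}=\sum_{k=0}^n\binom nk\bigl[(-1)^kf^{(k)}\bigr]\bigl[(-1)^{n-k}g^{(n-k)}\bigr]$. Consequently, in each of the regimes where Theorem~\ref{psi-plus-comp-mon-3} already furnishes the complete monotonicity of the bracketed function in~\eqref{cor-factor} or of its negative---namely $\alpha\ge i+1$ in~(1), $\alpha\le i$ in~(2), $\beta\ge\frac12$ with $\alpha\ge i$ in~(3), and $0<\beta<\frac12$ with~\eqref{alpha-beta-inverse} in~(4)---multiplying through by the completely monotonic factor $\frac1x$ delivers at once the assertions of the Corollary.

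The necessity parts of~(1), (2) and~(3) need a separate argument, since multiplication by $x$ does not preserve complete monotonicity (for example $\frac1x\cdot xe^{-x}=e^{-x}$ is completely monotonic whereas $xe^{-x}$ is not), so one cannot merely invert the product step. The way around this is to notice that, because $\frac1x>0$ on $(0,\infty)$, the sign of each function in the Corollary agrees with that of its counterpart in~\eqref{cor-factor}, and that a completely monotonic function is nonnegative, being the case $k=0$ of the definition. Thus, writing $R(x)=\frac{x\lvert\psi^{(i+1)}(x+\beta)\rvert}{\lvert\psi^{(i)}(x+\beta)\rvert}$, complete monotonicity of the function in the Corollary forces $\alpha\ge R(x)$ for every $x>0$, while complete monotonicity of its negative forces $\alpha\le R(x)$ for every $x>0$.

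It then remains to read off the boundary values of $R$ from the classical asymptotics of the polygamma functions. For $\beta=0$ one finds $R(x)\to i+1$ as $x\to0^+$ and $R(x)\to i$ as $x\to\infty$, so nonnegativity in~(1) forces $\alpha\ge i+1$ and nonpositivity in~(2) forces $\alpha\le i$; for $\beta\ge\frac12$ one has $R(x)\to0$ as $x\to0^+$ and $R(x)\to i$ as $x\to\infty$, so nonnegativity in~(3) forces $\alpha\ge i$. I expect no genuine obstacle beyond this: the only delicate point is the temptation to reverse the product argument, and once that is replaced by the sign comparison above, Corollary~\ref{cor-sub-3} really is an immediate consequence of Theorem~\ref{psi-plus-comp-mon-3}.
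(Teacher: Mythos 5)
Your sufficiency argument is exactly the paper's: the same factorization through $\frac1x$, the complete monotonicity of $\frac1x$, and the closure of completely monotonic functions under products. The necessity direction, which the paper's one-line proof leaves implicit, you handle correctly by observing that the positive factor $\frac1x$ preserves sign, so the $k=0$ case of complete monotonicity forces $\alpha\gtrless R(x)$ pointwise, and your stated limits of $R$ (namely $i+1$ as $x\to0^+$ and $i$ as $x\to\infty$ for $\beta=0$, and $i$ as $x\to\infty$ for $\beta\ge\frac12$) are the same ones extracted from the double inequality of Lemma~\ref{comp-thm-1} in the paper's proof of Theorem~\ref{itsf-gao-gen-3}, so your write-up is if anything more complete than the original.
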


\section{Remarks}

Before proving the above theorems and corollaries, we give several remarks about Theorem~\ref{itsf-gao-gen-3}, Theorem~\ref{psi-plus-comp-mon-3} and their applications.

\begin{rem}
Since $\lim_{\beta\to0^+}[\beta\delta^{-1}(\beta)]=0$ and that the inverse function $\delta^{-1}$ is decreasing from $\bigl(0,\frac12\bigr)$ onto $(0,\infty)$, we claim that
\begin{equation}\label{0-1-lim}
0<\biggl[\frac{e^{\delta^{-1}(\beta)}} {e^{\delta^{-1}(\beta)}-1}
+\beta-1\biggr]\delta^{-1}(\beta)<1,\quad \beta\in(0,1).
\end{equation}
Indeed, if replacing $\delta^{-1}(\beta)$ by $s$, the middle term in~\eqref{0-1-lim} becomes $\frac{s^2e^s}{(e^s-1)^2}$ which is decreasing from $(0,\infty)$ onto $(0,1)$. This implies that the condition~\eqref{alpha-beta-inverse} is not only sufficient but also necessary in Theorem~\ref{itsf-gao-gen-3} and Theorem~\ref{psi-plus-comp-mon-3}.
\end{rem}

\begin{rem}\label{acap-323-rem-1}
As mentioned in Section~\ref{ACAP-323-sec-1.3}, some conclusions in Theorem~\ref{itsf-gao-gen-3} have been applied in nearby fields.
\begin{enumerate}
\item
The first conclusion in Theorem~\ref{itsf-gao-gen-3} was utilized in~\cite[Theorem~2]{alzeraeq} to obtain a functional inequality concerning polygamma functions: For $k\ge1$ and $n\ge2$, the inequality
\begin{equation}
\bigl\vert\psi^{(k)}\bigl(M_n^{[r]}(x_\nu;p_\nu)\bigr)\bigr\vert\le M_n^{[s]}\bigl(\bigl\vert\psi^{(k)}(x_\nu)\bigr\vert;p_\nu\bigr)
\end{equation}
holds if and only if either $r\ge0$ and $s\ge-\frac{r}{k+1}$ or $r<0$ and $s\ge-\frac{r}k$, where $x_\nu>0$ and $p_\nu>0$ with $\sum_{\nu=1}^np_\nu=1$, and
\begin{equation}
M_n^{[t]}(x_\nu;p_\nu)=
\begin{cases}
\biggl(\sum\limits_{\nu=1}^np_\nu x_\nu^t\biggr)^{1/t},&t\ne0\\[0.8em]
\prod\limits_{\nu=1}^nx_\nu^{p_\nu},& t=0
\end{cases}
\end{equation}
stands for the discrete weighted power means.
\item
Some applications of the first two conclusions in Theorem~\ref{itsf-gao-gen-3} were carried out in~\cite{forum-alzer} as follows.
\begin{enumerate}
\item
The first conclusion in Theorem~\ref{itsf-gao-gen-3} was applied in~\cite[Theorem~4.9]{forum-alzer} to obtain that the inequalities
\begin{equation}
\biggl(1+\frac\alpha{x+s}\biggr)^n<\frac{\psi^{(n)}(x+s)}{\psi^{(n)}(x+1)} <\biggl(1+\frac\beta{x+s}\biggr)^n
\end{equation}
hold for $n\in\mathbb{N}$ and $s\in(0,1)$ with the best possible constants
\begin{equation}
\alpha=1-s\quad\text{and}\quad \beta=s\biggl[\frac{\psi^{(n)}(s)}{\psi^{(n)}(1)}\biggr]^{1/n}-s.
\end{equation}
\item
The special cases for $\beta=1$ of the third conclusion in Theorem~\ref{itsf-gao-gen-3} was employed in~\cite[Theorem~4.8]{forum-alzer} to derive that the inequalities
\begin{equation}
(n-1)!\exp\biggl[\frac\alpha{x}-n\psi(x)\biggr]<\bigl\vert\psi^{(n)}(x)\bigr\vert <(n-1)!\exp\biggl[\frac\beta{x}-n\psi(x)\biggr]
\end{equation}
hold for $n\in\mathbb{N}$ and $x>0$ if and only if $\alpha\le-n$ and $\beta\ge0$.
\item
Moreover, the convexity of $x^c\bigl\lvert\psi^{(k)}(x)\bigl\lvert$ was used in~\cite[Theorem~4.16]{forum-alzer} to establish that the double inequalities
\begin{equation}
\alpha\biggl(\frac1x-\frac1y\biggr)< x^n\bigl\vert\psi^{(n)}(x)\bigr\vert- y^n\bigl\vert\psi^{(n)}(y)\bigr\vert<\beta\biggl(\frac1x-\frac1y\biggr)
\end{equation}
hold for $n\in\mathbb{N}$ and $y>x>0$ with the best possible constants $\alpha=\frac{n!}2$ and $\beta=n!$.
\end{enumerate}
\item
The special cases for $\beta=1$ of the third conclusion in Theorem~\ref{itsf-gao-gen-3}, the monotonic properties of the functions
$$
\begin{aligned}
& x^{2i}\psi^{(2i)}(1+x), & & & & x^{2i+1}\psi^{(2i)}(1+x),\\
& x^{2i-1}\psi^{(2i-1)}(1+x) & & \text{and} & & x^{2i}\psi^{(2i-1)}(1+x)
\end{aligned}
$$
on $[0,\infty)$, were used in~\cite{alsina-tomas-sandor.tex-rgmia, alsina-tomas-sandor.tex} to establish the monotonic, logarithmically convex, completely monotonic properties of the functions
\begin{equation}
\frac{[\Gamma(1+x)]^y}{\Gamma(1+xy)}\quad \text{and}\quad \frac{\Gamma(1+y)[\Gamma(1+x)]^y}{\Gamma(1+xy)}
\end{equation}
or their first and second logarithmic derivatives.
\item
The special case for $\beta\ge1$ and $i=1$ of the third conclusion in Theorem~\ref{itsf-gao-gen-3} was employed in~\cite[Theorem~3.1]{gao-sub-add-psi} to reveal the subadditive property of the function $\psi(a+e^x)$ on $(-\infty,\infty)$.
\end{enumerate}
\end{rem}

\begin{rem}\label{acap-323-rem-2}
The former two conclusions in Theorem~\ref{itsf-gao-gen-3}, which were ever circulated in the preprint~\cite{subadditive-qi-3.tex}, have been employed in the proofs of~\cite[Theorem~1.4]{ATSC-ITSF.tex} and~\cite[Theorem~2.2 and Theorem~2.3]{note-on-neuman-ITSF-simplified.tex}.
\begin{enumerate}
\item
The special cases for $\alpha=i$ and $\beta=1$ of the second conclusion in Theorem~\ref{itsf-gao-gen-3} were made use of to procure the following theorem.

\begin{thm}[{\cite[Theorem~1.4]{ATSC-ITSF.tex}}]\label{continue-thm}
The function
\begin{equation}\label{G(s,t;x)}
G_{s,t}(x)=\frac{[\Gamma(1+tx)]^s}{[\Gamma(1+sx)]^t}
\end{equation}
for $x,s,t\in\mathbb{R}$ such that $1+sx>0$ and $1+tx>0$ with $s\ne t$ has the following properties:
\begin{enumerate}
\item
For $t>s>0$ and $x\in(0,\infty)$, $G_{s,t}(x)$ is an increasing function and a logarithmically completely monotonic function of second order in $x$;
\item
For $t>s>0$ and $x\in\bigl(-\frac1t,0\bigr)$, $G_{s,t}(x)$ is a logarithmically completely monotonic function in $x$;
\item
For $s<t<0$ and $x\in(-\infty,0)$, $G_{s,t}(x)$ is a decreasing function and a logarithmically absolutely monotonic function of second order in $x$;
\item
For $s<t<0$ and $x\in\bigl(0,-\frac1s\bigr)$, $G_{s,t}(x)$ is a logarithmically completely monotonic function in $x$;
\item
For $s<0<t$ and $x\in\bigl(-\frac1t,0\bigr)$, $G_{t,s}(x)$ is an increasing function and a logarithmically absolutely convex function in $x$;
\item
For $s<0<t$ and $x\in\bigl(0,-\frac1s\bigr)$, $G_{t,s}(x)$ is a decreasing function and a logarithmically absolutely convex function in $x$.
\end{enumerate}
\end{thm}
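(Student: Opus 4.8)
The plan is to pass to logarithms and reduce every assertion to a sign determination for a single family of expressions. Writing $L(x)=\log G_{s,t}(x)=s\log\Gamma(1+tx)-t\log\Gamma(1+sx)$ and differentiating, one finds $L'(x)=st[\psi(1+tx)-\psi(1+sx)]$ and, more generally, for every $k\ge1$,
\[
L^{(k)}(x)=st\bigl[t^{k-1}\psi^{(k-1)}(1+tx)-s^{k-1}\psi^{(k-1)}(1+sx)\bigr],
\]
where $\psi^{(0)}=\psi$. Since $G_{s,t}>0$, the monotonicity of $G_{s,t}$ is governed by the sign of $L'$, whereas the logarithmically completely/absolutely monotonic behaviour and the logarithmic convexity are governed by the signs of $L^{(k)}$ for $k\ge2$ (the qualifier ``of second order'' meaning precisely that $[\log G_{s,t}]''$ is completely, respectively absolutely, monotonic). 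Recalling that $\psi$ is strictly increasing and that $\psi^{(j)}(y)=(-1)^{j+1}\bigl\lvert\psi^{(j)}(y)\bigr\rvert$ with $\bigl\lvert\psi^{(j)}\bigr\rvert$ strictly decreasing on $(0,\infty)$ for $j\ge1$, each claim reduces to comparing the two competing terms in the bracket above.

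First I would dispose of the first derivative: in each region one checks that the stated domain of $x$ guarantees $1+sx>0$ and $1+tx>0$, records the ordering of the arguments $1+tx$ and $1+sx$, and reads off the sign of $L'$ from the monotonicity of $\psi$; this yields the increasing/decreasing assertions at once. For $k\ge2$ two situations occur. When the two factors reinforce each other — the larger of the powers $\lvert t\rvert^{k-1},\lvert s\rvert^{k-1}$ being attached to the larger of the magnitudes $\bigl\lvert\psi^{(k-1)}\bigr\rvert$, which happens exactly when $x$ and the common sign of $s,t$ push the corresponding argument closer to $0$ — the required inequality follows from the monotonicity of $\bigl\lvert\psi^{(k-1)}\bigr\rvert$ alone, and one obtains the full (logarithmic) complete or absolute monotonicity without any further input.

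The genuinely nontrivial situation is the \emph{competitive} one, in which the larger power multiplies the smaller magnitude (this is the case $t>s>0$, $x>0$ and its mirror image). Here crude monotonicity of $\bigl\lvert\psi^{(k-1)}\bigr\rvert$ cannot decide the sign, and this is exactly where the second conclusion of Theorem~\ref{itsf-gao-gen-3}, with $\alpha=i$ and $\beta=1$, is used: the function $F_i(y)=y^i\bigl\lvert\psi^{(i)}(1+y)\bigr\rvert$ is strictly increasing on $[0,\infty)$. Multiplying the bracketed difference by the positive factor $x^{k-1}$ converts the comparison of $t^{k-1}\bigl\lvert\psi^{(k-1)}(1+tx)\bigr\rvert$ with $s^{k-1}\bigl\lvert\psi^{(k-1)}(1+sx)\bigr\rvert$ into the comparison of $F_{k-1}(tx)$ with $F_{k-1}(sx)$; since $tx$ and $sx$ are ordered and positive, the monotonicity of $F_{k-1}$ settles the sign for all $k\ge2$. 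Because in these regions $L'$ carries the opposite sign (it yields the monotonicity of $G_{s,t}$ rather than the complete/absolute-monotonicity sign), the sign pattern holds only from the second derivative onward, which is the content of ``of second order''. The logarithmic convexity statements in parts (5) and (6) come from the same identity, the convexity of $\log G_{t,s}$ being merely the positivity of $L''$, i.e.\ of $\psi'$.

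I expect the main obstacle to be the sign bookkeeping across the six regions rather than any single hard estimate. The parity of $k$ interacts with the signs of $s$ and $t$: when $s,t$ have a common sign the powers $t^{k-1},s^{k-1}$ are positive or negative together, whereas when $s<0<t$ they differ in sign, so that $(-1)^kL^{(k)}$ may alternate and one must track $\operatorname{sign}(st)$, the parity factors, and the ordering of the arguments simultaneously to decide between complete monotonicity, absolute monotonicity, and convexity in each case. Once the single identity for $L^{(k)}$ is in hand, however, every region is a finite and routine sign computation, and the only essential analytic ingredient beyond elementary properties of $\psi$ is the monotonicity of $F_i(y)=y^i\bigl\lvert\psi^{(i)}(1+y)\bigr\rvert$ furnished by Theorem~\ref{itsf-gao-gen-3}.
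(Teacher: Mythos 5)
First, a caveat about the comparison you are asking for: this paper does not prove Theorem~\ref{continue-thm} at all. The theorem is imported verbatim from \cite{ATSC-ITSF.tex} inside Remark~\ref{acap-323-rem-2}, purely to document that Theorem~\ref{itsf-gao-gen-3} has been applied elsewhere; the only hint the paper gives about its proof is the sentence that ``the special cases for $\alpha=i$ and $\beta=1$ of the second conclusion in Theorem~\ref{itsf-gao-gen-3} were made use of to procure'' it. Your overall strategy --- the identity $[\ln G_{s,t}]^{(k)}(x)=st\bigl[t^{k-1}\psi^{(k-1)}(1+tx)-s^{k-1}\psi^{(k-1)}(1+sx)\bigr]$, the split into reinforcing and competitive configurations, and the use of the monotonicity of $F_i(y)=y^i\bigl\lvert\psi^{(i)}(1+y)\bigr\rvert$ to settle the competitive ones --- is exactly the route that sentence points to, and it works cleanly for parts (1)--(3).

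The genuine gap is that you declare the six-fold sign bookkeeping ``finite and routine'' and never perform it, and that is precisely where the content (and the danger) lies. Run your own identity, with $L=\ln G_{s,t}$, on part (4): for $s<t<0$ and $x\in\bigl(0,-\frac1s\bigr)$ one has $0<1+sx<1+tx<1$, hence $L'(x)=st[\psi(1+tx)-\psi(1+sx)]>0$, and for $k\ge2$ the reinforcing computation gives $L^{(k)}(x)=st\bigl[\lvert s\rvert^{k-1}\bigl\lvert\psi^{(k-1)}(1+sx)\bigr\rvert-\lvert t\rvert^{k-1}\bigl\lvert\psi^{(k-1)}(1+tx)\bigr\rvert\bigr]>0$; so every derivative of $\ln G_{s,t}$ is positive and $G_{s,t}$ is increasing there (for $s=-2$, $t=-1$ one has $G_{s,t}(x)=\Gamma(1-2x)/[\Gamma(1-x)]^2$, which rises from $1$ at $x=0$ to about $2.07$ at $x=0.4$), whereas an increasing function cannot be logarithmically completely monotonic. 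By the symmetry $G_{s,t}(x)=G_{-t,-s}(-x)$, part (4) is the reflection of part (2) and should yield \emph{absolute}, not complete, monotonicity of $\ln G_{s,t}$. The same bookkeeping shows the increasing/decreasing claims in parts (5)--(6) attach to $G_{s,t}=1/G_{t,s}$ rather than to $G_{t,s}$, and your parenthetical that convexity of $\ln G_{t,s}$ is ``the positivity of $L''$'' has the sign backwards, since $\ln G_{t,s}=-L$. So either the statement as transcribed here contains slips that a proof must detect and repair, or your plan, executed literally, fails on half the cases; in either event the case analysis you defer is not routine and must actually be carried out and recorded.
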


\item
The first two conclusions in Theorem~\ref{itsf-gao-gen-3} were hired in \cite{note-on-neuman-ITSF-simplified.tex}, a simplified version of the preprint~\cite{note-on-neuman.tex}, to derive the following two theorems.

\begin{thm}[{\cite[Theorem~2.2]{note-on-neuman-ITSF-simplified.tex}}]
For $b>a>0$ and $i\in\mathbb{N}$, the function $\frac{[\Gamma(bx)]^a}{[\Gamma(ax)]^b}$ is $(2i+1)$-log-convex and $(2i)$-log-concave with respect to $x\in(0,\infty)$.
\end{thm}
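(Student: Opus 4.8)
The plan is to pass to the logarithm and read off the signs of its derivatives. Writing
\begin{equation*}
L(x)=\ln\frac{[\Gamma(bx)]^a}{[\Gamma(ax)]^b}=a\ln\Gamma(bx)-b\ln\Gamma(ax),
\end{equation*}
the two assertions ``$(2i+1)$-log-convex'' and ``$(2i)$-log-concave'' mean precisely that $L^{(2i+1)}(x)\ge0$ and $L^{(2i)}(x)\le0$ on $(0,\infty)$. A direct chain-rule differentiation gives, for every integer $k\ge1$,
\begin{equation*}
L^{(k)}(x)=ab\bigl[b^{k-1}\psi^{(k-1)}(bx)-a^{k-1}\psi^{(k-1)}(ax)\bigr],
\end{equation*}
so the whole problem reduces to determining the sign of the bracket when $k-1\in\{2i-1,2i\}$.

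Next I would exploit the known sign $(-1)^{m-1}\psi^{(m)}(y)>0$ of the polygamma functions for $y>0$ and $m\ge1$. Setting $m=k-1\ge1$ and introducing the auxiliary function $\phi_m(y)=y^m\bigl\lvert\psi^{(m)}(y)\bigr\rvert$, one has $b^m\bigl\lvert\psi^{(m)}(bx)\bigr\rvert=\phi_m(bx)/x^m$ and likewise for $a$, whence
\begin{equation*}
L^{(k)}(x)=\frac{ab\,(-1)^{m-1}}{x^m}\bigl[\phi_m(bx)-\phi_m(ax)\bigr].
\end{equation*}
Theorem~\ref{itsf-gao-gen-3}(1) asserts that $x^{\gamma}\bigl\lvert\psi^{(m)}(x)\bigr\rvert$ is strictly decreasing on $(0,\infty)$ whenever $\gamma\le m$; taking $\gamma=m$ (its borderline case) shows that $\phi_m$ is strictly decreasing. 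Since $b>a>0$ forces $bx>ax$, the bracket $\phi_m(bx)-\phi_m(ax)$ is negative, and therefore $L^{(k)}(x)$ has the sign $(-1)^{m-1}\cdot(-1)=(-1)^{m}=(-1)^{k-1}$.

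Finally I would read off the two desired conclusions: for $k=2i+1$ the exponent $m=2i$ is even, so $L^{(2i+1)}(x)>0$, which is the $(2i+1)$-log-convexity; for $k=2i$ the exponent $m=2i-1$ is odd, so $L^{(2i)}(x)<0$, which is the $(2i)$-log-concavity. In fact the same computation yields the alternating pattern $(-1)^{k-1}L^{(k)}(x)>0$ for all $k\ge2$, so both statements are merely the even and odd instances of a single fact. I expect the only delicate point to be the sign bookkeeping, namely correctly tracking the parity-dependent factor $(-1)^{m-1}$ that arises when $\psi^{(m)}$ is replaced by $\pm\bigl\lvert\psi^{(m)}\bigr\rvert$, together with the verification that the borderline exponent $\gamma=m$ genuinely lies in the decreasing regime $\gamma\le m$ of Theorem~\ref{itsf-gao-gen-3}(1). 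No integral representation and none of the shifted ($\beta>0$) cases are needed, since the arguments $ax$ and $bx$ carry no shift.
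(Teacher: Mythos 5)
Your proof is correct, and it follows exactly the route this paper attributes to the source: the only input is the first conclusion of Theorem~\ref{itsf-gao-gen-3} (that $y^{m}\bigl\lvert\psi^{(m)}(y)\bigr\rvert$ is strictly decreasing since $\alpha=m\le m$), and your sign bookkeeping is consistent with the companion Theorem~2.3 for the shifted case, where the increasing branch of Theorem~\ref{itsf-gao-gen-3}(2) flips the parities. Note that the paper itself quotes this theorem without proof, so there is nothing further to compare against.
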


\begin{thm}[{\cite[Theorem~2.3]{note-on-neuman-ITSF-simplified.tex}}]
For $b>a>0$, $i\in\mathbb{N}$ and $\beta\ge\frac12$, the function $\frac{[\Gamma(bx+\beta)]^a}{[\Gamma(ax+\beta)]^b}$ is $(2i+1)$-log-concave and $(2i)$-log-convex with respect to $x\in(0,\infty)$.
\end{thm}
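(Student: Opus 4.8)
The plan is to take logarithms and read off the two convexity/concavity properties from the sign of successive derivatives, reducing everything to the boundary case of Theorem~\ref{itsf-gao-gen-3}(2). Write $f(x)=\frac{[\Gamma(bx+\beta)]^a}{[\Gamma(ax+\beta)]^b}$ and
\begin{equation*}
h(x)=\ln f(x)=a\ln\Gamma(bx+\beta)-b\ln\Gamma(ax+\beta),
\end{equation*}
which is well defined on $(0,\infty)$ since $b>a>0$ and $\beta\ge\frac12$ keep both arguments positive. Differentiating and using $\frac{d}{dx}\ln\Gamma(cx+\beta)=c\psi(cx+\beta)$ repeatedly, I would obtain, for every integer $n\ge1$,
\begin{equation*}
h^{(n)}(x)=ab^{n}\psi^{(n-1)}(bx+\beta)-a^{n}b\,\psi^{(n-1)}(ax+\beta).
\end{equation*}
Recall that $f$ is $k$-log-convex (respectively $k$-log-concave) precisely when $h^{(k)}\ge0$ (respectively $h^{(k)}\le0$), so the whole theorem amounts to showing $h^{(2i)}\ge0$ and $h^{(2i+1)}\le0$ on $(0,\infty)$.

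The key structural fact is that the polygamma function $\psi^{(m)}$ has constant sign $(-1)^{m+1}$ on $(0,\infty)$, so that $\psi^{(m)}(x)=(-1)^{m+1}\bigl\lvert\psi^{(m)}(x)\bigr\rvert$. Taking $n=2i$ gives $\psi^{(2i-1)}>0$, whence
\begin{equation*}
\frac{x^{2i-1}}{ab}\,h^{(2i)}(x)=(bx)^{2i-1}\bigl\lvert\psi^{(2i-1)}(bx+\beta)\bigr\rvert-(ax)^{2i-1}\bigl\lvert\psi^{(2i-1)}(ax+\beta)\bigr\rvert,
\end{equation*}
while taking $n=2i+1$ gives $\psi^{(2i)}<0$, whence
\begin{equation*}
\frac{x^{2i}}{ab}\,h^{(2i+1)}(x)=-\Bigl[(bx)^{2i}\bigl\lvert\psi^{(2i)}(bx+\beta)\bigr\rvert-(ax)^{2i}\bigl\lvert\psi^{(2i)}(ax+\beta)\bigr\rvert\Bigr].
\end{equation*}
In both lines the bracketed expression has the shape $g_m(bx)-g_m(ax)$ with $g_m(u)=u^{m}\bigl\lvert\psi^{(m)}(u+\beta)\bigr\rvert$ and $m=2i-1$, $m=2i$ respectively; note that the power of $u$ equals the order of the polygamma factor, so the exponent sits exactly at the critical value $\alpha=m$.

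Now I would invoke Theorem~\ref{itsf-gao-gen-3}(2) in its boundary form, namely that for $\beta\ge\frac12$ the function $g_m(u)=u^{m}\bigl\lvert\psi^{(m)}(u+\beta)\bigr\rvert$ is strictly increasing on $[0,\infty)$, this being precisely the endpoint case $\alpha=i$ of that statement (with the roles of its index and exponent both played by $m$). Since $b>a>0$ forces $bx>ax>0$ for $x>0$, monotonicity yields $g_m(bx)-g_m(ax)\ge0$ for each relevant $m$, so the first display above is nonnegative and the second is nonpositive; hence $h^{(2i)}(x)\ge0$ and $h^{(2i+1)}(x)\le0$ on $(0,\infty)$, which is exactly the asserted $(2i)$-log-convexity and $(2i+1)$-log-concavity. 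The single nontrivial ingredient is the sharp monotonicity at the endpoint exponent, which is exactly what Theorem~\ref{itsf-gao-gen-3} supplies; once it is granted, the argument is entirely mechanical, consisting only of logarithmic differentiation and tracking the constant sign $(-1)^{m+1}$ of $\psi^{(m)}$. Thus the genuine difficulty has already been absorbed into the proof of Theorem~\ref{itsf-gao-gen-3}(2), and no further obstacle remains.
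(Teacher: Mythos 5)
Your proof is correct and follows exactly the route the paper indicates: this theorem is quoted in Remark~\ref{acap-323-rem-2} without proof, attributed to an application of the first two conclusions of Theorem~\ref{itsf-gao-gen-3}, and your reduction of $h^{(2i)}\ge0$ and $h^{(2i+1)}\le0$ to the endpoint case $\alpha=m$ of Theorem~\ref{itsf-gao-gen-3}(2) for $g_m(u)=u^{m}\bigl\lvert\psi^{(m)}(u+\beta)\bigr\rvert$ with $m=2i-1$ and $m=2i$ is precisely that application. The sign bookkeeping $\psi^{(m)}=(-1)^{m+1}\bigl\lvert\psi^{(m)}\bigr\rvert$ and the differentiation formula for $h^{(n)}$ are both correct, so no gap remains.
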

\end{enumerate}
\par
For exact definitions of the terminologies such as ``logarithmically completely monotonic function of $k$-th order'', ``logarithmically absolutely monotonic function of $k$-th order'', ``$k$-log-convex function'' and ``logarithmically absolutely convex function'', see~\cite[Definition~1.1, Definition~1.2 and Definition~1.3]{ATSC-ITSF.tex}, or the first paragraph of~\cite{note-on-neuman-ITSF-simplified.tex}, or related texts in~\cite{schur-complete}.
\end{rem}

\begin{rem}\label{acap-323-rem-3}
The former two conclusions in Theorem~\ref{psi-plus-comp-mon-3}, which were also issued in the preprint~\cite{subadditive-qi-3.tex}, have also been applied in the proofs of~\cite[Theorem~1]{new-upper-kershaw-JCAM.tex} and~\cite[Theorem~1]{new-upper-kershaw-2.tex-mia}.
\begin{enumerate}
\item
The first result in Theorem~\ref{psi-plus-comp-mon-3} was utilized in \cite[Theorem~1]{new-upper-kershaw-JCAM.tex} to procure upper bounds for the ratio of two gamma functions and the divided differences of the psi and polygamma functions as follows.

\begin{thm}[{\cite[Theorem~1]{new-upper-kershaw-JCAM.tex}}]\label{upper-ratio-1}
For $a>0$ and $b>0$ with $a\ne b$, inequalities
\begin{equation}\label{identric-kershaw-ineq-equiv}
\biggl[\frac{\Gamma(a)}{\Gamma(b)}\biggr]^{1/(a-b)}\le e^{\psi(I(a,b))}
\end{equation}
and
\begin{equation}\label{batir-psi-ineq-ref-equiv}
\frac{(-1)^{n}\bigl[\psi^{(n-1)}(a) -\psi^{(n-1)}(b)\bigr]}{a-b} \le(-1)^n\psi^{(n)}(I(a,b))
\end{equation}
hold true, where $n\in\mathbb{N}$ and
\begin{equation}
I(a,b)=\frac1e\biggl(\frac{b^b}{a^a}\biggr)^{1/(b-a)}
\end{equation}
represents the identric or exponential mean.
\end{thm}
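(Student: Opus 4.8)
The plan is to prove both \eqref{identric-kershaw-ineq-equiv} and \eqref{batir-psi-ineq-ref-equiv} at one stroke by recasting them as instances of Jensen's inequality, the decisive point being the integral characterisation of the identric mean. Since every quantity below is symmetric in $a$ and $b$, I may assume $a>b>0$. First I would record the elementary identities
\[
\ln\Gamma(a)-\ln\Gamma(b)=\int_b^a\psi(t)\td t, \qquad \psi^{(n-1)}(a)-\psi^{(n-1)}(b)=\int_b^a\psi^{(n)}(t)\td t,
\]
together with the identity that singles out $I(a,b)$,
\[
\ln I(a,b)=\frac1{a-b}\int_b^a\ln t\,\td t,
\]
the latter being immediate from $\int\ln t\,\td t=t\ln t-t$ and the definition of $I(a,b)$. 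Writing $\mathcal A[f]=\frac1{a-b}\int_b^a f(t)\td t$ for the average over $[b,a]$, taking logarithms in \eqref{identric-kershaw-ineq-equiv} converts it into $\mathcal A[\psi]\le\psi\bigl(e^{\mathcal A[\ln]}\bigr)$, while \eqref{batir-psi-ineq-ref-equiv} becomes $(-1)^n\mathcal A\bigl[\psi^{(n)}\bigr]\le(-1)^n\psi^{(n)}\bigl(e^{\mathcal A[\ln]}\bigr)$.

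Next I would exhibit these as Jensen inequalities. Regard $t$ as uniformly distributed on $[b,a]$ and set $X=\ln t$, so that $\mathbb E[X]=\mathcal A[\ln]=\ln I(a,b)$. Putting $\Phi_n(s)=(-1)^n\psi^{(n)}(e^s)$, one has $(-1)^n\mathcal A\bigl[\psi^{(n)}\bigr]=\mathbb E\bigl[\Phi_n(X)\bigr]$ and $(-1)^n\psi^{(n)}(I)=\Phi_n\bigl(\mathbb E[X]\bigr)$, so the target inequality is exactly $\mathbb E[\Phi_n(X)]\le\Phi_n(\mathbb E[X])$, i.e. Jensen's inequality provided $\Phi_n$ is \emph{concave}. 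The case of \eqref{identric-kershaw-ineq-equiv} is the member $n=0$ of this family upon reading $\psi^{(0)}=\psi$ (and, in the formation of the divided difference, $\psi^{(-1)}=\ln\Gamma$), so it suffices to treat all $n\ge0$ uniformly.

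The heart of the matter is therefore the concavity of $\Phi_n$. A direct differentiation gives, with $x=e^s$,
\[
\Phi_n''(s)=(-1)^n x\bigl[x\psi^{(n+2)}(x)+\psi^{(n+1)}(x)\bigr] =x\bigl[\bigl\lvert\psi^{(n+1)}(x)\bigr\rvert-x\bigl\lvert\psi^{(n+2)}(x)\bigr\rvert\bigr],
\]
where the second equality uses the sign pattern $\psi^{(k)}(x)=(-1)^{k+1}\bigl\lvert\psi^{(k)}(x)\bigr\rvert$ valid for $k\ge1$. Hence $\Phi_n$ is concave on $\mathbb R$ if and only if, writing $i=n+1\ge1$,
\[
x\bigl\lvert\psi^{(i+1)}(x)\bigr\rvert\ge\bigl\lvert\psi^{(i)}(x)\bigr\rvert,\qquad x>0.
\]
This pointwise inequality is precisely the nonnegativity furnished by the complete monotonicity of $x\bigl\lvert\psi^{(i+1)}(x)\bigr\rvert-\bigl\lvert\psi^{(i)}(x)\bigr\rvert$, that is, the second conclusion of Theorem~\ref{psi-plus-comp-mon-3} (equivalently Corollary~\ref{cor-sub-3}) with $\alpha=1$, which is admissible since $\alpha=1\le i$. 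Feeding this back through Jensen's inequality establishes both \eqref{identric-kershaw-ineq-equiv} and \eqref{batir-psi-ineq-ref-equiv}.

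I expect the only genuinely delicate step to be the sign bookkeeping in the last computation: one must track the factors $(-1)^{k+1}$ carried by the polygamma functions and verify that the requisite estimate is the \emph{lower} bound $x\lvert\psi^{(i+1)}\rvert\ge\lvert\psi^{(i)}\rvert$, coming from the $\alpha\le i$ branch of Theorem~\ref{psi-plus-comp-mon-3}, rather than the opposite \emph{upper} bound $x\lvert\psi^{(i+1)}\rvert\le(i+1)\lvert\psi^{(i)}\rvert$ furnished by the $\alpha\ge i+1$ branch, which would be of no use here. Once the parity is pinned down, the reduction to Jensen's inequality and the appeal to Theorem~\ref{psi-plus-comp-mon-3} are routine.
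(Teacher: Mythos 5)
Your argument is correct, but note first that this paper contains no proof of Theorem~\ref{upper-ratio-1} to compare against: the statement is quoted from \cite{new-upper-kershaw-JCAM.tex} inside a remark, merely as an advertised application of Theorem~\ref{psi-plus-comp-mon-3}. Your self-contained route is sound. The integral characterisation $\ln I(a,b)=\frac1{a-b}\int_b^a\ln t\,\td t$ is right; the computation $\Phi_n''(s)=x\bigl[\bigl\lvert\psi^{(n+1)}(x)\bigr\rvert-x\bigl\lvert\psi^{(n+2)}(x)\bigr\rvert\bigr]$ with $x=e^s$ checks out; and the pointwise bound $x\bigl\lvert\psi^{(i+1)}(x)\bigr\rvert\ge\bigl\lvert\psi^{(i)}(x)\bigr\rvert$ for $i=n+1\ge1$ does follow from the second conclusion of Theorem~\ref{psi-plus-comp-mon-3} with $\alpha=1\le i$, since a completely monotonic function is nonnegative (equivalently, it is the $\alpha=1\le i$ case of the decreasingness of $x^\alpha\bigl\lvert\psi^{(i)}(x)\bigr\rvert$ in Theorem~\ref{itsf-gao-gen-3}); Jensen for the concave $\Phi_n$ then gives exactly \eqref{batir-psi-ineq-ref-equiv}, and \eqref{identric-kershaw-ineq-equiv} is its $n=0$ instance via $i=1$, $\alpha=1\le1$. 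Two observations on how this relates to the paper. You use much less than complete monotonicity --- only the sign of the relevant function at each point --- so your proof is more elementary than the framing here suggests. Also, the remark credits the \emph{first} conclusion of Theorem~\ref{psi-plus-comp-mon-3} (the $\alpha\ge i+1$ branch) for the original proof of this upper bound, whereas your analysis shows that for concavity of $(-1)^n\psi^{(n)}(e^s)$, and hence for the bound at the identric mean $L_0=I$, it is the $\alpha\le i$ branch that does the work (the $\alpha\ge i+1$ branch instead yields convexity of $(-1)^i\psi^{(i)}\bigl(v^{1/p}\bigr)$ for $p\le-i-1$ and so the companion lower bound in Theorem~\ref{upper-ratio-2}); so either the cited proof proceeds differently (e.g.\ through a completely monotonic auxiliary function rather than Jensen) or the attribution in the remark is imprecise. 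Either way your derivation stands.
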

\item
The first two results in Theorem~\ref{psi-plus-comp-mon-3} were employed in \cite[Theorem~1]{new-upper-kershaw-2.tex-mia} to acquire lower bounds for the ratio of two gamma functions and the divided differences of the psi and polygamma functions and to refine the inequality~\eqref{batir-psi-ineq-ref-equiv}.

\begin{thm}[{\cite[Theorem~1]{new-upper-kershaw-2.tex-mia}}]\label{upper-ratio-2}
For $a>0$ and $b>0$ with $a\ne b$, the inequality
\begin{equation}\label{new-upper-main-ineq}
(-1)^i\psi^{(i)}(L_\alpha(a,b))\le \frac{(-1)^i}{b-a}\int_a^b\psi^{(i)}(u)\td u \le(-1)^i\psi^{(i)}(L_\beta(a,b))
\end{equation}
holds if $\alpha\le-i-1$ and $\beta\ge-i$, where $i$ is a nonnegative integer and
\begin{equation}\label{L_p(a,b)}
L_p(a,b)=
\begin{cases}
\left[\dfrac{b^{p+1}-a^{p+1}}{(p+1)(b-a)}\right]^{1/p},&p\ne-1,0\\[1em]
\dfrac{b-a}{\ln b-\ln a},&p=-1\\
I(a,b),&p=0
\end{cases}
\end{equation}
stands for the generalized logarithmic mean of order $p\in\mathbb{R}$.
\end{thm}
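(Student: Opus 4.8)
The plan is to reduce the double inequality~\eqref{new-upper-main-ineq} to two applications of Jensen's inequality whose convexity hypotheses are supplied directly by Theorem~\ref{psi-plus-comp-mon-3}. Since every term in~\eqref{new-upper-main-ineq} is symmetric in $a$ and $b$, I may assume $a<b$. Writing $F(x)=(-1)^i\psi^{(i)}(x)$, the middle term is the average $\frac1{b-a}\int_a^bF(u)\td u$. From the series representation of the polygamma functions one checks that $F'(x)=\bigl|\psi^{(i+1)}(x)\bigr|>0$ and $F''(x)=-\bigl|\psi^{(i+2)}(x)\bigr|<0$ for every $i\ge0$, so $F$ is strictly increasing; moreover the generalized logarithmic mean $L_p(a,b)$ is strictly increasing in $p$. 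Hence it suffices to prove the two extremal estimates $F\bigl(L_{-i-1}(a,b)\bigr)\le\frac1{b-a}\int_a^bF(u)\td u\le F\bigl(L_{-i}(a,b)\bigr)$, after which the general cases $\alpha\le-i-1$ and $\beta\ge-i$ follow from the monotonicity of $L_p$ in $p$ together with that of $F$.

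The key is that $L_p$ is an integral power mean: $L_p(a,b)^p=\frac1{b-a}\int_a^bu^p\td u$ for $p\ne0$, and $\ln L_0(a,b)=\frac1{b-a}\int_a^b\ln u\td u$. For the upper bound when $i\ge1$, I would put $\varphi(u)=u^{-i}$ and $G=F\circ\varphi^{-1}$, so that $F=G\circ\varphi$. A chain-rule computation gives, at $v=\varphi(u)$, the expression $G''(v)=\frac1{i^2}\,u^{2i+1}\bigl[(i+1)F'(u)+uF''(u)\bigr]$, so $G$ is concave precisely when $(i+1)\bigl|\psi^{(i+1)}(u)\bigr|-u\bigl|\psi^{(i+2)}(u)\bigr|\le0$. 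Granting this, Jensen's inequality applied to the concave $G$ with $u$ distributed uniformly on $[a,b]$ yields $\frac1{b-a}\int_a^bF(u)\td u=\frac1{b-a}\int_a^bG(\varphi(u))\td u\le G\!\left(\frac1{b-a}\int_a^b\varphi(u)\td u\right)=F\bigl(L_{-i}(a,b)\bigr)$, since $\frac1{b-a}\int_a^b\varphi(u)\td u=L_{-i}(a,b)^{-i}=\varphi\bigl(L_{-i}(a,b)\bigr)$. For the lower bound I would instead take $\varphi(u)=u^{-(i+1)}$; the analogous computation shows $G=F\circ\varphi^{-1}$ is convex exactly when $(i+2)\bigl|\psi^{(i+1)}(u)\bigr|-u\bigl|\psi^{(i+2)}(u)\bigr|\ge0$, and the reversed Jensen inequality then delivers $\frac1{b-a}\int_a^bF(u)\td u\ge F\bigl(L_{-i-1}(a,b)\bigr)$.

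It remains only to verify the two sign conditions, and this is exactly where Theorem~\ref{psi-plus-comp-mon-3} enters. Applying its first conclusion with index $i+1$ and parameter $i+2$ shows that $(i+2)\bigl|\psi^{(i+1)}(u)\bigr|-u\bigl|\psi^{(i+2)}(u)\bigr|$ is completely monotonic, hence nonnegative, which secures the lower bound; applying its second conclusion with index $i+1$ and parameter $i+1$ shows that the negative of $(i+1)\bigl|\psi^{(i+1)}(u)\bigr|-u\bigl|\psi^{(i+2)}(u)\bigr|$ is completely monotonic, whence that expression is $\le0$, securing the upper bound. The sole edge case is the upper bound for $i=0$, where $L_{-i}=L_0$ is the identric mean; here I would replace the power substitution by $\varphi(u)=\ln u$, and concavity of $F\circ\varphi^{-1}$ again reduces to $\bigl|\psi'(u)\bigr|-u\bigl|\psi''(u)\bigr|\le0$, the same inequality furnished by the second conclusion of Theorem~\ref{psi-plus-comp-mon-3}. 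I expect the main difficulty to be purely organizational rather than analytic: selecting the correct power (or logarithmic) substitution for each mean, keeping the signs of $F'$ and $F''$ straight so that the second-derivative tests collapse to the precise polygamma inequalities, and matching indices so that the completely monotonic functions of Theorem~\ref{psi-plus-comp-mon-3} are invoked at exactly the right parameter values; the genuine analytic content of the argument is already packaged in that theorem.
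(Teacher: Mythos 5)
This statement is not actually proved in the paper: it sits inside Remark~\ref{acap-323-rem-3} as a verbatim quotation of a theorem from another article, cited there precisely because its proof \emph{elsewhere} uses the first two conclusions of Theorem~\ref{psi-plus-comp-mon-3}. So there is no in-paper proof to compare yours against; I can only judge the proposal on its own terms, and on those terms it is correct and is visibly the intended argument. Your reduction is sound: with $F=(-1)^i\psi^{(i)}$ one has $F'=\bigl|\psi^{(i+1)}\bigr|>0$, $F''=-\bigl|\psi^{(i+2)}\bigr|<0$; the identity $L_p(a,b)^p=\frac1{b-a}\int_a^bu^p\td u$ (valid for all $p\ne0$, including $p=-1$) together with the classical monotonicity of $p\mapsto L_p(a,b)$ correctly reduces everything to the two extremal cases; and your chain-rule computation checks out, e.g.\ for $\varphi(u)=u^{-(i+1)}$ one gets $G''(v)=\frac{1}{(i+1)^2}u^{2i+3}\bigl[(i+2)F'(u)+uF''(u)\bigr]$, so convexity of $G$ is exactly the nonnegativity of $(i+2)\bigl|\psi^{(i+1)}(u)\bigr|-u\bigl|\psi^{(i+2)}(u)\bigr|$, which is the first conclusion of Theorem~\ref{psi-plus-comp-mon-3} applied at index $i+1$ with $\alpha=(i+1)+1$; the concavity needed for the upper bound (including the logarithmic substitution in the $i=0$, $L_0=I(a,b)$ case) is likewise exactly the second conclusion at index $i+1$ with $\alpha=i+1$. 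The only things you leave implicit are routine: that $\varphi$ is monotone so its image of $[a,b]$ is an interval on which Jensen applies, and the standard proof of the monotonicity of $L_p$ in $p$. Nothing here is a gap.
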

\end{enumerate}
\par
The topic of bounding the ratio of two gamma functions has a history of at least sixty years since \cite{wendel}. For more information on its history, backgrounds, motivations and recent developments, please refer to, for example, \cite{alzeraeq, forum-alzer, Ismail-Muldoon-119, Muldoon-78, notes-best-simple-equiv.tex, notes-best-simple-open.tex, notes-best-simple.tex, sandor-gamma-3-note.tex-amc, new-upper-kershaw-JCAM.tex, new-upper-kershaw-2.tex-mia, Zhang-Morden, Zhang-Xu-Situ}, especially to the expository and survey preprint~\cite{bounds-two-gammas.tex} in which plentiful references are collected. For knowledge of mean values, please refer to the celebrated book~\cite{bullenmean} or the paper~\cite{royal-98}.
\end{rem}

\begin{rem}
Recall~\cite{alzer-koumandos, alzer-rus, gao-sub-add-psi} that a function $f(x)$ is said to be subadditive on $I$ if the inequality
\begin{equation}\label{sub-dfn-ineq}
f(x+y)\le f(x)+f(y)
\end{equation}
holds for all $x,y\in I$ with $x+y\in I$. If the inequality~\eqref{sub-dfn-ineq} is reversed, then $f(x)$ is called superadditive on $I$.
\par
The subadditive and superadditive functions play important roles in the theory of differential equations, in the study of semi-groups, in number theory, in the theory of convex bodies, and the like. See~\cite{alzer-koumandos, Alzer-Ruehr, alzer-rus} and the related references therein.
\par
Some subadditive or superadditive properties of the gamma, psi and polygamma functions have been discovered as follows.
\par
In \cite{Alzer-Ruehr}, the function $\psi(a+x)$ is proved to be sub-multiplicative with respect to $x\in[0,\infty)$ if and only if $a\ge a_0$, where $a_0$ denotes the only positive real number which satisfies $\psi(a_0)=1$.
\par
In~\cite{alzer-rus}, the function $[\Gamma(x)]^\alpha$ was proved to be subadditive on $(0,\infty)$ if and only if $\frac{\ln2}{\ln\Delta}\le\alpha\le0$, where $\Delta=\min_{x\ge0}\frac{\Gamma(2x)}{\Gamma(x)}$.
\par
In~\cite[Lemma~2.4]{forum-alzer}, the function $\psi(e^x)$ was proved to be strictly concave on $\mathbb{R}$.
\par
In~\cite[Theorem~3.1]{gao-sub-add-psi}, the function $\psi(a+e^x)$ is proved to be subadditive on $(-\infty,\infty)$ if and only if $a\ge c_0$, where $c_0$ is the only positive zero of $\psi(x)$.
\par
In~\cite[Theorem~1]{Extension-alzer-AMEN.tex}, among other things, it was presented that the function $\psi^{(k)}(e^x)$ for $k\in\mathbb{N}$ is concave $($or convex, respectively$)$ on $\mathbb{R}$ if $k=2n-2$ $($or $k=2n-1$, respectively$)$ for $n\in\mathbb{N}$.
\par
By the aid of the monotonicity of the function $x^\alpha\bigl\lvert\psi^{(i)}(x+\beta)\bigl\lvert$ in Theorem~\ref{itsf-gao-gen-3}, the following subadditive and superadditive properties of the function $\bigl\lvert\psi^{(i)}(e^x)\bigl\lvert$ for $i\in\mathbb{N}$ were acquired recently.

\begin{thm}[\cite{subadditive-qi-2.tex-rgmia}]\label{sub-sup-add}
For $i\in\mathbb{N}$, the function $\bigl\lvert\psi^{(i)}(e^x)\bigl\lvert$ is superadditive on $(-\infty,\ln\theta_0)$ and subadditive on $(\ln\theta_0,\infty)$, where $\theta_0\in(0,1)$ is the unique root of the equation $2\bigl\lvert\psi^{(i)}(\theta)\bigl\lvert=\bigl\lvert\psi^{(i)}(\theta^2)\bigl\lvert$.
\end{thm}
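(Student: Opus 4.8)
The plan is to pass to the single--variable function $f(x)=\bigl\lvert\psi^{(i)}(e^x)\bigr\rvert$ on $\mathbb{R}$ and to rephrase the two additive conditions for it. First I would record two structural facts. Since $\frac{\td}{\td\theta}\bigl\lvert\psi^{(i)}(\theta)\bigr\rvert=-\bigl\lvert\psi^{(i+1)}(\theta)\bigr\rvert$, the chain rule gives
\begin{equation*}
f'(x)=-e^x\bigl\lvert\psi^{(i+1)}(e^x)\bigr\rvert<0,
\end{equation*}
so $f$ is strictly decreasing; moreover, applying the first conclusion of Theorem~\ref{itsf-gao-gen-3} with $i$ replaced by $i+1$ and exponent $1\le i+1$ shows that $\theta\bigl\lvert\psi^{(i+1)}(\theta)\bigr\rvert$ is strictly decreasing, and composing with the increasing map $\theta=e^x$ makes $f'$ strictly increasing, i.e. $f$ is strictly convex on $\mathbb{R}$. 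Writing $x_0=\ln\theta_0$, the defining equation $2\bigl\lvert\psi^{(i)}(\theta_0)\bigr\rvert=\bigl\lvert\psi^{(i)}(\theta_0^2)\bigr\rvert$ becomes the single relation $2f(x_0)=f(2x_0)$, and the whole theorem reduces to proving $f(x+y)\le f(x)+f(y)$ for $x,y>x_0$ and $f(x+y)\ge f(x)+f(y)$ for $x,y<x_0$ (here $x_0<0$, so $x,y<x_0$ already forces $x+y<2x_0<x_0$).

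The second step is a convexity reduction. For fixed sum the map $x\mapsto f(x)+f(s-x)$ is convex with minimum at $x=s/2$, so for all $x,y$
\begin{equation*}
f(x)+f(y)-f(x+y)\ge 2f\Bigl(\tfrac{x+y}2\Bigr)-f(x+y)=:D(x+y).
\end{equation*}
Now $D'(s)=f'(s/2)-f'(s)$ is positive for $s<0$, while for $s>0$ the monotonicity of $f$ gives directly $D(s)=2f(s/2)-f(s)>f(s)>0$; combined with $D(0)=f(0)=\bigl\lvert\psi^{(i)}(1)\bigr\rvert>0$ and $D(s)\to-\infty$ as $s\to-\infty$ (the pole of $\psi^{(i)}$ at $0$), one finds that $D$ has a unique zero $s^{*}<0$ with $D(s)>0$ exactly for $s>s^{*}$; since $D(s^{*})=0$ means $2f(s^{*}/2)=f(s^{*})$, the uniqueness forces $s^{*}=2x_0$, which simultaneously re-derives that $\theta_0$ is the unique root and lies in $(0,1)$. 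For $x,y>x_0$ we have $x+y>2x_0=s^{*}$, so the displayed inequality yields $f(x)+f(y)-f(x+y)\ge D(x+y)>0$, which is the subadditivity.

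The superadditivity is the part I expect to be the main obstacle, because convexity produces the two--variable inequality in the \emph{subadditive} direction and the reverse estimate must be obtained separately. I would fix $y<x_0$ and examine $\Lambda(x)=f(x+y)-f(x)-f(y)$ for $x<x_0$: since $y<0$ and $f'$ is increasing, $\Lambda'(x)=f'(x+y)-f'(x)<0$, so $\Lambda$ is strictly decreasing and bounded below by its right--endpoint limit $M(y)=f(x_0+y)-f(x_0)-f(y)$, and it suffices to show $M(y)\ge0$. Differentiating, $M'(y)=f'(x_0+y)-f'(y)<0$ because $x_0+y<y$, so $M$ is strictly decreasing, while as $y\to x_0^{-}$,
\begin{equation*}
M(y)\to f(2x_0)-2f(x_0)=0
\end{equation*}
by the defining relation $2f(x_0)=f(2x_0)$. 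Hence $M(y)>0$ for $y<x_0$, so $\Lambda(x)>M(y)>0$ and $f(x+y)>f(x)+f(y)$, proving the superadditivity. The only genuinely delicate points are the two boundary limits (the value $0$ of $M$ at $x_0$ and the divergence of $D$ at $-\infty$), which rest on the defining equation of $\theta_0$ together with the behaviour of $\psi^{(i)}$ near $0$; everything else is forced by the strict convexity and monotonicity of $f$ supplied by Theorem~\ref{itsf-gao-gen-3}.
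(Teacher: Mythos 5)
This theorem is not proved in the present paper at all: it is quoted verbatim from the reference \cite{subadditive-qi-2.tex-rgmia}, so there is no in-paper proof to compare against. Judged on its own, your argument is correct and complete. The two structural inputs check out: $\frac{\td}{\td\theta}\bigl\lvert\psi^{(i)}(\theta)\bigr\rvert=-\bigl\lvert\psi^{(i+1)}(\theta)\bigr\rvert$ gives $f'(x)=-e^x\bigl\lvert\psi^{(i+1)}(e^x)\bigr\rvert<0$, and Theorem~\ref{itsf-gao-gen-3}(1) with $i\mapsto i+1$, $\alpha=1\le i+1$ makes $\theta\bigl\lvert\psi^{(i+1)}(\theta)\bigr\rvert$ strictly decreasing, hence $f'$ strictly increasing. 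The convexity reduction $f(x)+f(y)-f(x+y)\ge D(x+y)$ with $D(s)=2f(s/2)-f(s)$, the sign analysis of $D$ (strictly increasing on $(-\infty,0)$, $D(0)=f(0)>0$, $D(s)>f(s)>0$ for $s>0$, and $D(s)\to-\infty$ as $s\to-\infty$ since $f(s)\sim i!\,e^{-(i+1)s}$ dominates $2f(s/2)$), and the identification of the unique zero $s^*=2\ln\theta_0$ all hold, and they simultaneously establish the existence, uniqueness and location in $(0,1)$ of $\theta_0$. The superadditive half, which convexity alone cannot give, is handled correctly by the two monotone reductions $\Lambda$ and $M$, with the boundary value $M(x_0^-)=f(2x_0)-2f(x_0)=0$ supplied exactly by the defining equation of $\theta_0$. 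This route is consistent in spirit with the paper's own remark that the cited result was ``acquired by the aid of the monotonicity of the function $x^\alpha\bigl\lvert\psi^{(i)}(x+\beta)\bigr\rvert$ in Theorem~\ref{itsf-gao-gen-3}''; your write-up in addition makes explicit where strict convexity of $f$ enters and why the subadditive and superadditive halves require genuinely different mechanisms, which is a worthwhile clarification.
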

\end{rem}

\begin{rem}\label{acap-323-rem-4}
The second conclusion in Theorem~\ref{psi-plus-comp-mon-3} was cited in~\cite[Lemma~2.4]{Zhang-Morden} and~\cite[Remark~2.3]{Zhang-Xu-Situ}.
\end{rem}

\begin{rem}
Theorem~\ref{sub-sup-add} and the facts mentioned in Remark~\ref{acap-323-rem-2} to Remark~\ref{acap-323-rem-4} show the potential applicability of Theorem~\ref{itsf-gao-gen-3} and Theorem~\ref{psi-plus-comp-mon-3} convincingly.
\end{rem}

\begin{rem}
In passing, we recollect the notion ``logarithmically completely monotonic function'' which is equivalent to the logarithmically completely monotonic function of $0$-th order mentioned in Remark~\ref{acap-323-rem-2}. A function $f(x)$ is said to be logarithmically completely monotonic on an interval $I\subseteq\mathbb{R}$ if it has derivatives of all orders on $I$ and its logarithm $\ln f$ satisfies
\begin{equation}
0\le(-1)^k[\ln f(x)]^{(k)}<\infty
\end{equation}
for $k\in\mathbb{N}$ on $I$. By looking through the database \href{http://www.ams.org/mathscinet/}{MathSciNet}, we find that this phrase was first used in~\cite{Atanassov}, but with no a word to explicitly define it. Thereafter, it seems to have been ignored by the mathematical community. In early 2004, this terminology was recovered in~\cite{minus-one} and it was immediately referenced in \cite{auscm-rgmia}, the preprint of the paper~\cite{e-gam-rat-comp-mon}. A natural question that one may ask is: Whether is this notion trivial or not? In \cite[Theorem~4]{minus-one}, it was proved that all logarithmically completely monotonic functions are also completely monotonic, but not conversely. This result was formally published when revising~\cite{compmon2}. Hereafter, this conclusion and its proofs were dug in~\cite{CBerg, Gao-0709.1126v2-Arxiv, grin-ismail, schur-complete} once and again. Furthermore,  in the paper~\cite{CBerg}, the logarithmically completely monotonic functions on $(0,\infty)$ were characterized as the infinitely divisible completely monotonic functions studied in~\cite{horn} and all Stieltjes transforms were proved to be logarithmically completely monotonic on $(0,\infty)$, where a function $f(x)$ defined on $(0,\infty)$ is called a Stieltjes transform if it can be of the form
\begin{equation}
f(x)=a+\int_0^\infty\frac1{s+x}{\td\mu(s)}
\end{equation}
for some nonnegative number $a$ and some nonnegative measure $\mu$ on $[0,\infty)$ satisfying $\int_0^\infty\frac1{1+s}\td\mu(s)<\infty$. For more information, please refer to~\cite{CBerg}.
\par
It is remarked that many completely monotonic functions founded in a lot of literature such as~\cite{Ismail-Muldoon-119, Muldoon-78, haer}, \cite[Chapter~XIII]{mpf-93} and the related references therein are actually logarithmically completely monotonic.
\end{rem}

\section{Lemmas}

In order to verify Theorem~\ref{itsf-gao-gen-3}, Theorem~\ref{psi-plus-comp-mon-3} and their corollaries in Section~\ref{sec-1.4} and Section~\ref{sec-1.5}, we need the following lemmas, in which Lemma~\ref{mentioned} is simple but has been validated in \cite{notes-best-simple-equiv.tex, notes-best-simple-open.tex, notes-best-simple.tex, sandor-gamma-2-ITSF.tex} to be especially effectual in proving the monotonicity and (logarithmically) complete monotonicity of functions involving the gamma, psi and polygamma functions.

\begin{lem}\label{mentioned}
Let $f(x)$ is a function defined on an infinite interval $I$ whose right end is $\infty$. If $\lim_{x\to\infty}f(x)=\delta$ and $f(x)-f(x+\varepsilon)>0$ hold true for  some given scalar $\varepsilon>0$ and all $x\in I$, then $f(x)>\delta$.
\end{lem}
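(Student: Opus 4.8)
The plan is to exploit the strict difference hypothesis along the arithmetic progression of step $\varepsilon$ anchored at an arbitrary point of $I$, and then to let the index tend to infinity so that the limiting value $\delta$ can be compared with the terms. First I would fix an arbitrary $x\in I$. Since the right end of $I$ is $\infty$ and $x\in I$, the interval contains every point to the right of $x$, so $x+n\varepsilon\in I$ for all $n=0,1,2,\dots$, and the sequence $\{f(x+n\varepsilon)\}_{n=0}^\infty$ is well defined.

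Next I would record that this sequence is strictly decreasing. Applying the hypothesis $f(y)-f(y+\varepsilon)>0$ successively at $y=x,\,x+\varepsilon,\,x+2\varepsilon,\dots$ yields
\[
f(x)>f(x+\varepsilon)>f(x+2\varepsilon)>\cdots>f(x+n\varepsilon)>\cdots.
\]
Because $x+n\varepsilon\to\infty$ as $n\to\infty$ and $\lim_{x\to\infty}f(x)=\delta$, the values converge, namely $\lim_{n\to\infty}f(x+n\varepsilon)=\delta$. Thus $\delta$ is the infimum of a strictly decreasing convergent sequence.

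I would then extract the \emph{strict} inequality. A strictly decreasing sequence converging to $\delta$ has every term strictly above $\delta$: if some term equalled $\delta$, the next term would lie strictly below $\delta$, contradicting that $\delta$ is the limit (equivalently the infimum). In particular the initial term satisfies $f(x)>\delta$, and since $x\in I$ was arbitrary the claim holds throughout $I$. Equivalently, one may argue by contradiction: were $f(x_0)\le\delta$ for some $x_0\in I$, then $f(x_0+\varepsilon)<f(x_0)\le\delta$, whence monotonicity gives $\lim_{n\to\infty}f(x_0+n\varepsilon)\le f(x_0+\varepsilon)<\delta$, contradicting the established limit $\delta$.

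There is no genuine analytic obstacle here; the lemma is elementary. The only points that need care are that the progression $x+n\varepsilon$ stays inside $I$, which is exactly what the hypothesis ``right end $\infty$'' guarantees, and that the desired inequality is \emph{strict}, which is precisely why the argument must invoke the strictness of the single-step decrease $f(y)>f(y+\varepsilon)$ rather than a mere $\ge$. Accordingly, I expect the write-up to be a short paragraph, with the comparison ``limit of a decreasing sequence is a strict lower bound for every term'' carrying the entire content.
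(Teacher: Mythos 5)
Your argument is correct and is essentially identical to the paper's own proof, which likewise iterates the hypothesis to obtain the strictly decreasing chain $f(x)>f(x+\varepsilon)>\dotsm>f(x+k\varepsilon)\to\delta$ and concludes $f(x)>\delta$. The extra care you take in justifying why the strict inequality survives the passage to the limit is a welcome (if minor) elaboration of a step the paper leaves implicit.
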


\begin{proof}
By mathematical induction, for all $x\in I$, we have
\begin{equation*}
f(x)>f(x+\varepsilon)>f(x+2\varepsilon)>\dotsm>f(x+k\varepsilon)\to\delta
\end{equation*}
as $k\to\infty$. The proof of Lemma~\ref{mentioned} is complete.
\end{proof}

\begin{lem}[\cite{abram,wange,wang}]
The polygamma functions $\psi^{(k)}(x)$ may be expressed for $x>0$ and
$k\in\mathbb{N}$ as
\begin{equation}\label{psim}
\psi ^{(k)}(x)=(-1)^{k+1}\int_{0}^{\infty}\frac{t^{k}e^{-xt}}{1-e^{-t}}\td t.
\end{equation}
For $x>0$ and $r>0$,
\begin{equation}\label{fracint}
\frac1{x^r}=\frac1{\Gamma(r)}\int_0^\infty t^{r-1}e^{-xt}\td t.
\end{equation}
For $i\in\mathbb{N}$ and $x>0$,
\begin{equation}\label{psisymp4}
\psi^{(i-1)}(x+1)=\psi^{(i-1)}(x)+\frac{(-1)^{i-1}(i-1)!}{x^i}.
\end{equation}
\end{lem}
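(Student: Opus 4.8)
The plan is to verify the three formulas in the order \eqref{fracint}, \eqref{psisymp4}, \eqref{psim}, since the integral representation \eqref{fracint} of a negative power is precisely what turns the series for $\psi^{(k)}$ into the integral \eqref{psim}. To obtain \eqref{fracint} I would start from the defining integral \eqref{egamma}, namely $\Gamma(r)=\int_0^\infty u^{r-1}e^{-u}\td u$, and perform the substitution $u=xt$, which is legitimate for $x>0$; the power $x^{r}$ then factors out of the integral and a rearrangement gives \eqref{fracint}. For the recurrence \eqref{psisymp4} I would take the logarithmic derivative of the functional equation $\Gamma(x+1)=x\Gamma(x)$ to obtain $\psi(x+1)=\psi(x)+\frac1x$, differentiate this identity $i-1$ times with respect to $x$, and use the elementary fact that the $(i-1)$-st derivative of $x^{-1}$ equals $(-1)^{i-1}(i-1)!\,x^{-i}$.

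For the integral representation \eqref{psim} I would begin from the classical series expansion, obtained by differentiating the series for $\psi$ term by term,
\begin{equation*}
\psi^{(k)}(x)=(-1)^{k+1}k!\sum_{n=0}^\infty\frac1{(x+n)^{k+1}},\qquad k\in\mathbb{N}, \; x>0,
\end{equation*}
insert into each summand its integral representation \eqref{fracint} with $r=k+1$, and interchange the summation with the integration. Summing the geometric series $\sum_{n=0}^\infty e^{-nt}=\frac1{1-e^{-t}}$, which converges for every $t>0$, then produces the factor $\frac1{1-e^{-t}}$ and hence \eqref{psim}.

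The step I expect to demand the most care is the justification of the interchange of the infinite sum and the integral in the derivation of \eqref{psim}. Because the integrand $t^{k}e^{-(x+n)t}$ is nonnegative for $x>0$ and $t>0$, I would appeal to Tonelli's theorem---equivalently, to the monotone convergence theorem applied to the partial sums---with finiteness secured by the convergence of $\sum_{n=0}^\infty(x+n)^{-(k+1)}$ for $k\ge1$. This is the only genuinely analytic point; the remaining manipulations are routine.
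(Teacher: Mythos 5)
Your derivations are all correct, but note that the paper offers no proof of this lemma at all: it is stated purely as a citation to the standard references \cite{abram,wange,wang}, so there is nothing in the paper to compare against. The three arguments you give --- the substitution $u=xt$ in \eqref{egamma} to get \eqref{fracint}, logarithmic differentiation of $\Gamma(x+1)=x\Gamma(x)$ followed by $(i-1)$-fold differentiation to get \eqref{psisymp4}, and termwise integration of the series $\psi^{(k)}(x)=(-1)^{k+1}k!\sum_{n\ge0}(x+n)^{-k-1}$ via \eqref{fracint} with $r=k+1$ and the geometric series to get \eqref{psim} --- are precisely the standard ones found in those references, and the interchange of sum and integral that you single out is indeed legitimate by Tonelli's theorem since the terms $t^{k}e^{-(x+n)t}$ are nonnegative and the resulting integral is finite for $x>0$ and $k\ge1$.
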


\begin{lem}\label{comp-thm-1}
For $k\in\mathbb{N}$, the double inequality
\begin{equation}\label{qi-psi-ineq}
\frac{(k-1)!}{x^k}+\frac{k!}{2x^{k+1}}<(-1)^{k+1}\psi^{(k)}(x)
<\frac{(k-1)!}{x^k}+\frac{k!}{x^{k+1}}
\end{equation}
holds on $(0,\infty)$.
\end{lem}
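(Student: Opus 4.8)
The plan is to represent all three quantities in \eqref{qi-psi-ineq} as integrals against the common kernel $e^{-xt}$, and then reduce the whole statement to an elementary pointwise inequality in the integration variable $t$. First I would invoke the integral representation \eqref{psim} to write $(-1)^{k+1}\psi^{(k)}(x)=\int_0^\infty \frac{t^{k}e^{-xt}}{1-e^{-t}}\,\td t$. Next, using \eqref{fracint} together with $\Gamma(k)=(k-1)!$ and $\Gamma(k+1)=k!$, I would record the identities $\frac{(k-1)!}{x^{k}}=\int_0^\infty t^{k-1}e^{-xt}\,\td t$ and $\frac{k!}{x^{k+1}}=\int_0^\infty t^{k}e^{-xt}\,\td t$, valid for $x>0$. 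In particular the lower endpoint of \eqref{qi-psi-ineq} equals $\int_0^\infty t^{k-1}\bigl(1+\frac{t}{2}\bigr)e^{-xt}\,\td t$ and the upper endpoint equals $\int_0^\infty t^{k-1}(1+t)e^{-xt}\,\td t$.

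Writing $t^{k}=t^{k-1}\cdot t$ in the integrand of the middle term, the middle quantity becomes $\int_0^\infty t^{k-1}\cdot\frac{t}{1-e^{-t}}\cdot e^{-xt}\,\td t$, so all three terms are integrals of $t^{k-1}e^{-xt}$ against the three factors $1+\frac{t}{2}$, $\frac{t}{1-e^{-t}}$ and $1+t$. Since the weight $t^{k-1}e^{-xt}$ is strictly positive on $(0,\infty)$, the desired double inequality follows at once from the pointwise estimate $1+\frac{t}{2}<\frac{t}{1-e^{-t}}<1+t$ for all $t>0$; strictness of the integral inequalities is automatic because these pointwise inequalities are strict on a set of full measure.

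It then remains to prove this pointwise double inequality, which is where the substance lies, although it is entirely elementary. The upper bound is immediate: after clearing the positive denominator $1-e^{-t}$, the inequality $\frac{t}{1-e^{-t}}<1+t$ is equivalent to $e^{-t}(1+t)<1$, that is, to the familiar fact $e^{t}>1+t$ for $t>0$. The lower bound $\frac{t}{1-e^{-t}}>1+\frac{t}{2}$ is the main obstacle; clearing denominators and multiplying by $e^{t}>0$ reduces it to $h(t):=\frac{t}{2}(e^{t}+1)-(e^{t}-1)>0$ on $(0,\infty)$. I would prove this by the standard ascent argument: one checks $h(0)=0$, $h'(t)=\frac12+\frac12 e^{t}(t-1)$ with $h'(0)=0$, and $h''(t)=\frac{t e^{t}}{2}>0$ for $t>0$, so $h'$ is strictly increasing from $0$ and hence positive, forcing $h$ to be strictly increasing from $h(0)=0$ and thus positive. (Equivalently this is the classical inequality $\frac{t}{2}\cdot\frac{e^{t}+1}{e^{t}-1}>1$ on $(0,\infty)$.) Combining the two pointwise bounds with the integral reduction of the previous step completes the proof of \eqref{qi-psi-ineq}.
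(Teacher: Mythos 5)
Your proof is correct, and it takes a genuinely different (and more self-contained) route than the paper. The paper does not argue directly: it cites the known results that $\psi(x)-\ln x+\frac{\alpha}{x}$ is completely monotonic on $(0,\infty)$ if and only if $\alpha\ge1$ and that its negative is completely monotonic if and only if $\alpha\le\frac12$ (equivalently, the logarithmically complete monotonicity of $e^x\Gamma(x)/x^{x-\alpha}$ with the same sharp constants), and then obtains \eqref{qi-psi-ineq} by differentiating those statements $k$ times and invoking the fact that a nonzero completely monotonic function cannot vanish. Your argument instead writes all three members of \eqref{qi-psi-ineq} as Laplace transforms via \eqref{psim} and \eqref{fracint} and reduces the claim to the pointwise kernel inequality $1+\frac{t}{2}<\frac{t}{1-e^{-t}}<1+t$ on $(0,\infty)$, which you verify by elementary calculus (the computation of $h$, $h'$, $h''$ checks out, and strictness of the integral inequalities follows since the pointwise inequalities are strict everywhere). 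In effect you have unpacked the kernel comparison that underlies the cited complete-monotonicity results, so your version is longer but requires no external input beyond the two integral representations already stated in the paper; the paper's version is shorter but leans on the literature and actually proves more (full complete monotonicity of the differences, not just their sign). Both are valid proofs of the lemma as stated.
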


\begin{proof}
In~\cite[Theorem~2.1]{Ismail-Muldoon-119} and \cite[Lemma~1.3]{sandor-gamma-2-ITSF.tex}, the function $\psi(x)-\ln x+\frac{\alpha}x$ was proved to be completely monotonic on $(0,\infty)$, i.e.,
\begin{equation}\label{com-psi-ineq-dfn}
(-1)^i\biggl[\psi(x)-\ln x+\frac{\alpha}x\biggr]^{(i)}\ge0
\end{equation}
for $i\ge0$, if and only if $\alpha\ge1$, so is its negative, i.e., the inequality~\eqref{com-psi-ineq-dfn} is reversed, if and only if $\alpha\le\frac12$. In \cite{chen-qi-log-jmaa} and \cite[Theorem~2.1]{Muldoon-78}, the function $\frac{e^x\Gamma(x)} {x^{x-\alpha}}$ was proved to be logarithmically completely monotonic on $(0,\infty)$, i.e.,
\begin{equation}\label{com-psi-ineq-ln-dfn}
(-1)^k\biggl[\ln\frac{e^x\Gamma(x)} {x^{x-\alpha}}\biggr]^{(k)}\ge0
\end{equation}
for $k\in\mathbb{N}$, if and only if $\alpha\ge1$, so is its reciprocal, i.e., the inequality~\eqref{com-psi-ineq-ln-dfn} is reversed, if and only if $\alpha\le\frac12$. Considering the fact \cite[p.~82]{e-gam-rat-comp-mon} that a completely monotonic function which is non-identically zero cannot vanish at any point on $(0,\infty)$ and rearranging either~\eqref{com-psi-ineq-dfn} for $i\in\mathbb{N}$ or~\eqref{com-psi-ineq-ln-dfn} for $k\ge2$ leads to the double inequality~\eqref{qi-psi-ineq} immediately.
\end{proof}

\section{Proofs of theorems and corollaries}

\begin{proof}[Proof of Theorem~\ref{itsf-gao-gen-3}]
It is a standard argument to obtain that the function $\delta(t)$ is strictly decreasing from $(0,\infty)$ onto $\bigl(0,\frac12\bigr)$.
\par
Let $g_{i,\alpha,\beta}(x)=x^\alpha\bigl\lvert\psi^{(i)}(x+\beta)\bigl\lvert$ on $(0,\infty)$. Direct calculation and rearrangement yields
\begin{equation}\label{g-x-beta-1}
\begin{split}
\frac{g_{i,\alpha,\beta}'(x)}{x^{\alpha-1}}
&=\alpha\bigl\lvert\psi^{(i)}(x+\beta)\bigl\lvert-x\bigl\lvert\psi^{(i+1)}(x+\beta)\bigl\lvert\\
&=(-1)^{i+1}\bigl[\alpha\psi^{(i)}(x+\beta)+x\psi^{(i+1)}(x+\beta)\bigr].
\end{split}
\end{equation}
Making use of~\eqref{qi-psi-ineq} in~\eqref{g-x-beta-1} gives
\begin{equation}\label{lim=0-1}
\lim_{x\to\infty}\frac{g_{i,\alpha,\beta}'(x)}{x^{\alpha-1}}=0
\end{equation}
for $i\in\mathbb{N}$, $\alpha\in\mathbb{R}$ and $\beta\ge0$. In virtue of formulas~\eqref{psisymp4}, \eqref{fracint} and~\eqref{psim} in sequence, straightforward computation reveals
\begin{gather}
\frac{g_{i,\alpha,\beta}'(x)}{x^{\alpha-1}}-\frac{g_{i,\alpha,\beta}'(x+1)}{(x+1)^{\alpha-1}} =(-1)^{i+1}\bigl\{\alpha\bigl[\psi^{(i)}(x+\beta)-\psi^{(i)}(x+\beta+1)\bigr] \notag\\
+x\bigl[\psi^{(i+1)}(x+\beta)-\psi^{(i+1)}(x+\beta+1)\bigr]-\psi^{(i+1)}(x+\beta+1)\bigr\} \notag\\
\begin{aligned}
&=\frac{i!\alpha}{(x+\beta)^{i+1}} -\frac{(i+1)!x}{(x+\beta)^{i+2}}
-\frac{(i+1)!}{(x+\beta)^{i+2}} +(-1)^{i+2}\psi^{(i+1)}(x+\beta)\\
&=(-1)^{i+2}\psi^{(i+1)}(x+\beta)+\frac{i!(\alpha-i-1)}{(x+\beta)^{i+1}}
+\frac{(i+1)!(\beta-1)}{(x+\beta)^{i+2}}\\
&=\int_0^\infty\biggl[\frac{t}{1-e^{-t}}+(\beta-1)t+\alpha-i-1\biggr]
t^ie^{-(x+\beta)t}\td t\\
&\triangleq\int_0^\infty h_{i,\alpha,\beta}(t) t^ie^{-(x+\beta)t}\td t.\label{3-key-eq}
\end{aligned}
\end{gather}
\par
For $\beta=0$, easy differentiation shows that $h_{i,\alpha,0}'(t)=-\delta(t)<0$, and so the function $h_{i,\alpha,0}(t)$ is strictly decreasing from $(0,\infty)$ onto $(\alpha-i-1,\alpha-i)$. Thus, if $\alpha\ge i+1$, the functions $h_{i,\alpha,0}(t)$ and
$$
\frac{g_{i,\alpha,0}'(x)}{x^{\alpha-1}} -\frac{g_{i,\alpha,0}'(x+1)}{(x+1)^{\alpha-1}}
$$
are positive on $(0,\infty)$. Combining this with~\eqref{lim=0-1} and considering Lemma~\ref{mentioned}, it is deduced that the functions $\frac{g_{i,\alpha,0}'(x)}{x^{\alpha-1}}$ and $g_{i,\alpha,0}'(x)$ are positive on $(0,\infty)$. Hence, the function $g_{i,\alpha,0}(x)$ is strictly increasing on $(0,\infty)$ for $\alpha\ge i+1$. Similarly, for $\alpha\le i$, the function $g_{i,\alpha,0}(x)$ is strictly decreasing on $(0,\infty)$.
\par
For $\beta>0$, it is easy to see that $h_{i,\alpha,\beta}'(t)=-\delta(t)+\beta$, and $h_{i,\alpha,\beta}'(t)$ is strictly increasing from $(0,\infty)$ onto $\bigl(\beta-\frac12,\beta\bigr)$.
Consequently, if $\beta\ge\frac12$, the function $h_{i,\alpha,\beta}'(t)$ is positive and $h_{i,\alpha,\beta}(t)$ is strictly increasing from $(0,\infty)$ onto $(\alpha-i,\infty)$. Accordingly, if $\alpha\ge i$ and $\beta\ge\frac12$, the function $h_{i,\alpha,\beta}(t)$ is positive on $(0,\infty)$, that is,
\begin{equation}\label{minus-g-positive}
\frac{g_{i,\alpha,\beta}'(x)}{x^{\alpha-1}}-\frac{g_{i,\alpha,\beta}'(x+1)}{(x+1)^{\alpha-1}}>0
\end{equation}
on $(0,\infty)$. Combining this with Lemma~\ref{mentioned} results in the positivity of $g_{i,\alpha,\beta}'(x)$ on $(0,\infty)$. Therefore, for $\alpha\ge i$ and $\beta\ge\frac12$, the function $g_{i,\alpha,\beta}(x)$ is strictly increasing on $(0,\infty)$.
\par
For $0<\beta<\frac12$, since $h_{i,\alpha,\beta}'(t)$ is strictly increasing from $(0,\infty)$ onto $\bigl(\beta-\frac12,\beta\bigr)$, the function $h_{i,\alpha,\beta}(t)$ attains its unique minimum at some point $t_0\in(0,\infty)$ with $\delta(t_0)=\beta$. As a result, the unique minimum of $h_{i,\alpha,\beta}(t)$ equals
$$
\frac{\delta^{-1}(\beta)e^{\delta^{-1}(\beta)}}{e^{\delta^{-1}(\beta)}-1}
+(\beta-1)\delta^{-1}(\beta)+\alpha-i-1,
$$
where $\delta^{-1}$ is the inverse function of $\delta$ and is strictly decreasing from $(0,\frac12)$ onto $(0,\infty)$. Consequently, when the inequality~\eqref{alpha-beta-inverse} holds for $0<\beta<\frac12$, the function $h_{i,\alpha,\beta}(t)$ is positive on $(0,\infty)$, which means that the inequality~\eqref{minus-g-positive} holds true. Accordingly, making use of the limit~\eqref{lim=0-1} and Lemma~\ref{mentioned} again yields that the function $g_{i,\alpha,\beta}(x)$ is strictly increasing on $(0,\infty)$ if $0<\beta<\frac12$ and the inequality~\eqref{alpha-beta-inverse} is valid. The sufficiency is proved.
\par
If $g_{i,\alpha,0}(x)$ is strictly decreasing on $(0,\infty)$, then
\begin{equation}\label{deralp1}
x^{i+1-\alpha}g_{i,\alpha,0}'(x)
=\alpha{x^{i}}\bigl\lvert\psi^{(i)}(x)\bigl\lvert-x^{i+1}\bigl\lvert\psi^{(i+1)}(x)\bigl\lvert<0.
\end{equation}
Applying~\eqref{qi-psi-ineq} in~\eqref{deralp1} and letting $x\to\infty$ lead to
\begin{align*}
0&\ge\lim_{x\to\infty}x^{i+1-\alpha}g'_{i,\alpha,0}(x)\\
&\ge\alpha\lim_{x\to\infty}x^i\biggl[\frac{(i-1)!}{x^i}+\frac{i!}{2x^{i+1}}\biggr]
-\lim_{x\to\infty}x^{i+1}\biggl[\frac{i!}{x^{i+1}}+\frac{(i+1)!}{x^{i+2}}\biggr]\\
&=(i-1)!(\alpha-i),
\end{align*}
which means $\alpha\le i$.
\par
If $g_{i,\alpha,0}(x)$ is strictly increasing on $(0,\infty)$, then
\begin{equation}\label{deralp2}
x^{i+2-\alpha}g_{i,\alpha,0}'(x)
=\alpha{x^{i+1}}\bigl\lvert\psi^{(i)}(x)\bigl\lvert-x^{i+2}\bigl\lvert\psi^{(i+1)}(x)\bigl\lvert>0.
\end{equation}
Employing~\eqref{psisymp4} and~\eqref{qi-psi-ineq} in~\eqref{deralp2} and taking $x\to\infty$ results in
\begin{align*}
0&\le\lim_{x\to0^+}x^{i+2-\alpha}g_{i,\alpha,0}'(x)\\
&=\lim_{x\to0^+}\biggl\{\alpha{x^{i+1}}\bigl\lvert\psi^{(i)}(x)\bigl\lvert
-x^{i+2}\biggl[\bigl\lvert\psi^{(i+1)}(x+1)\bigl\lvert+\frac{(i+1)!}{x^{i+2}}\biggr]\biggr\}\\
&=\alpha\lim_{x\to0^+}{x^{i+1}}\bigl\lvert\psi^{(i)}(x)\bigl\lvert-(i+1)!
-\lim_{x\to0^+}x^{i+2}\bigl\lvert\psi^{(i+1)}(x+1)\bigl\lvert\\
&\le\alpha\lim_{x\to0^+}{x^{i+1}}
\biggl[\frac{(i-1)!}{x^i}+\frac{i!}{x^{i+1}}\biggr]-(i+1)!\\
&\quad-\lim_{x\to0^+}x^{i+2}\biggl[\frac{i!}{(x+1)^{i+1}}
+\frac{(i+1)!}{2(x+1)^{i+2}}\biggr]\\
&=i!(\alpha-i-1),
\end{align*}
thus, the necessary condition $\alpha\ge i+1$ follows.
\par
If the function $g_{i,\alpha,\beta}(x)$ is strictly increasing on $(0,\infty)$
for $\beta>0$, then
\begin{equation}\label{deralp11}
x^{i+1-\alpha}g_{i,\alpha,\beta}'(x)
=\alpha{x^{i}}\bigl\lvert\psi^{(i)}(x+\beta)\bigl\lvert-x^{i+1}\bigl\lvert\psi^{(i+1)}(x+\beta)\bigl\lvert>0.
\end{equation}
Utilizing~\eqref{qi-psi-ineq} in~\eqref{deralp11} and taking limit gives
\begin{align*}
0&\le\lim_{x\to\infty}x^{i+1-\alpha}g'_{i,\alpha,\beta}(x)\\*
&\le\alpha\lim_{x\to\infty}x^i\biggl[\frac{(i-1)!}{(x+\beta)^{i}}
+\frac{i!}{(x+\beta)^{i+1}}\biggr] -\lim_{x\to\infty}x^{i+1}\biggl[\frac{i!}{(x+\beta)^{i+1}}
+\frac{(i+1)!}{2(x+\beta)^{i+2}}\biggr]\\
&=(i-1)!(\alpha-i),
\end{align*}
which is equivalent to $\alpha\ge i$. The proof of Theorem~\ref{itsf-gao-gen-3} is thus completed.
\end{proof}

\begin{rem}
The first two conclusions in Theorem~\ref{itsf-gao-gen-3} were ever proved by virtue of the convolution theorem for Laplace transforms in~\cite[Lemma~1]{alzeraeq} and~\cite[Lemma~2.2]{forum-alzer}, so we supply a new and unified proof for them here.
\end{rem}

\begin{proof}[Proof of Corollary~\ref{bernou-thm}]
It is well-known~\cite{abram,wange,wang} that Bernoulli polynomials $B_k(x)$ may be defined by
\begin{equation}
\frac{te^{xt}}{e^t-1}=\sum_{k=0}^\infty B_k(x)\frac{t^k}{k!}
\end{equation}
and that Bernoulli numbers $B_k$ and Bernoulli polynomials $B_k(x)$ are connected by $B_k(1)=(-1)^kB_k(0)=(-1)^kB_k$ and $B_{2k+1}(0)=B_{2k+1}=0$ for $k\ge1$. Using these notations, the functions $h_{i,\alpha,\beta}(t)$ and $h_{i,\alpha,\beta}'(t)$ may be rewritten as
\begin{align*}
h_{i,\alpha,\beta}(t)&=\frac{te^t}{e^t-1}+(\beta-1)t+\alpha-i-1\\
&=\alpha-i+\biggl(\beta-\frac12\biggr)t+\sum_{k=2}^\infty B_k(1)\frac{t^k}{k!}\\
&=\alpha-i+\biggl(\beta-\frac12\biggr)t+\sum_{k=2}^\infty (-1)^kB_k\frac{t^k}{k!}\\
&=\alpha-i+\biggl(\beta-\frac12\biggr)t+\sum_{k=1}^\infty (-1)^{k+1}B_{k+1}\frac{t^{k+1}}{(k+1)!}\\
&=\alpha-i+\biggl(\beta-\frac12\biggr)t+\sum_{k=0}^\infty B_{2k+2}\frac{t^{2k+2}}{(2k+2)!}
\end{align*}
and
\begin{equation*}
h_{i,\alpha,\beta}'(t)=\beta-\frac12+\sum_{k=1}^\infty B_{2k}\frac{t^{2k-1}}{(2k-1)!}.
\end{equation*}
The proof of Theorem~\ref{itsf-gao-gen-3} shows that
\begin{enumerate}
\item
$h_{i,\alpha,0}'(t)<0$ on $(0,\infty)$;
\item
$h_{i,\alpha,0}(t)>0$ on $(0,\infty)$ if $\alpha\ge i+1$;
\item
$h_{i,\alpha,0}(t)<0$ on $(0,\infty)$ if $0<\alpha\le i$;
\item
$h_{i,\alpha,\beta}'(t)>0$ on $(0,\infty)$ if $\beta\ge\frac12$;
\item
$h_{i,\alpha,\beta}(t)>0$ on $(0,\infty)$ if $\alpha\ge i$ and $\beta\ge\frac12$;
\item
$h_{i,\alpha,\beta}(t)>0$ on $(0,\infty)$ if $0<\beta<\frac12$ and inequality~\eqref{alpha-beta-inverse} holds true.
\end{enumerate}
Basing on these and by standard argument, Corollary~\ref{bernou-thm} is thus proved.
\end{proof}

\begin{proof}[Proof of Theorem~\ref{psi-plus-comp-mon-3}]
If $h_{i,\alpha,\beta}(t)\gtrless0$ on $(0,\infty)$, then the function
$$
\pm\int_0^\infty h_{i,\alpha,\beta}(t)t^{i}e^{-(x+\beta)t}\td t
$$
is completely monotonic on $(-\beta,\infty)$, and so, by virtute of~\eqref{3-key-eq}, it is derived that
$$
\pm\biggl[\frac{g_{i,\alpha,\beta}'(x)}{x^{\alpha-1}}
-\frac{g_{i,\alpha,\beta}'(x+1)}{(x+1)^{\alpha-1}}\biggr]
$$
is completely monotonic on $(0,\infty)$, that is,
\begin{multline}\label{j=0=1}
(-1)^j\biggl[\frac{g_{i,\alpha,\beta}'(x)}{x^{\alpha-1}}
-\frac{g_{i,\alpha,\beta}'(x+1)}{(x+1)^{\alpha-1}}\biggr]^{(j)}\\*
=(-1)^j\biggl[\frac{g_{i,\alpha,\beta}'(x)}{x^{\alpha-1}}\biggr]^{(j)}
-(-1)^j\biggl[\frac{g_{i,\alpha,\beta}'(x+1)}{(x+1)^{\alpha-1}}\biggr]^{(j)}\gtreqless0
\end{multline}
on $(0,\infty)$ for $j\ge0$. Moreover, formulas~\eqref{qi-psi-ineq} and
\eqref{g-x-beta-1} imply
\begin{equation}\label{j=0}
\lim_{x\to\infty}\biggl[\frac{g_{i,\alpha,\beta}'(x)}{x^{\alpha-1}}\biggr]^{(j)}
=\lim_{x\to\infty}(-1)^j\biggl[\frac{g_{i,\alpha,\beta}'(x)}{x^{\alpha-1}}\biggr]^{(j)}=0.
\end{equation}
Combining~\eqref{j=0=1} and~\eqref{j=0} with Lemma~\ref{mentioned} concludes that
$$
(-1)^j\biggl[\frac{g_{i,\alpha,\beta}'(x)}{x^{\alpha-1}}\biggr]^{(j)}\gtreqless0,
$$
that is, the function
$$
\pm\frac{g_{i,\alpha,\beta}'(x)}{x^{\alpha-1}}
=\pm\bigl[\alpha\bigl\lvert\psi^{(i)}(x+\beta)\bigl\lvert -x\bigl\lvert\psi^{(i+1)}(x+\beta)\bigl\lvert\bigr]
$$
is completely monotonic on $(0,\infty)$, if $h_{i,\alpha,\beta}(t)\gtrless0$ on $(0,\infty)$. In the proof of Theorem~\ref{itsf-gao-gen-3}, we have demonstrated that $h_{i,\alpha,\beta}(t)$ is positive on $(0,\infty)$ if either $\beta=0$ and $\alpha\ge i+1$, or $\beta\ge\frac12$ and $\alpha\ge i$, or $0<\beta<\frac12$ and the inequality~\eqref{alpha-beta-inverse} is satisfied, and that
$h_{i,\alpha,\beta}(t)$ is negative on $(0,\infty)$ if $\beta=0$ and
$\alpha\le i$. As a result, the sufficient conditions for the function
$
\alpha\bigl\lvert\psi^{(i)}(x+\beta)\bigl\lvert-x\bigl\lvert\psi^{(i+1)}(x+\beta)\bigl\lvert
$
to be completely monotonic on $(0,\infty)$ follow.
\par
The derivation of necessary conditions is same as in Theorem~\ref{itsf-gao-gen-3}. The proof of Theorem~\ref{psi-plus-comp-mon-3} is complete.
\end{proof}

\begin{proof}[Proof of Corollary~\ref{cor-sub-3}]
It follows easily from Theorem~\ref{psi-plus-comp-mon-3} and the facts that
$$
\pm\biggl[\frac{\alpha}{x}\bigl\lvert\psi^{(i)}(x+\beta)\bigl\lvert
-\bigl\lvert\psi^{(i+1)}(x+\beta)\bigl\lvert\biggr]
=\pm\frac1x\bigl\{\alpha\bigl\lvert\psi^{(i)}(x+\beta)\bigl\lvert
-x\bigl\lvert\psi^{(i+1)}(x+\beta)\bigl\lvert\bigr\},
$$
that the function $\frac1x$ is completely monotonic on $(0,\infty)$, and that the product of any finite completely monotonic functions is also completely monotonic on the intersection of their domains.
\end{proof}

\end{document}